\renewcommand{\and}{\qquad\text{and}\qquad}
\let\oldmarginpar\marginpar
\renewcommand\marginpar[1]{\-\oldmarginpar[\raggedleft\footnotesize #1]%
{\raggedright\footnotesize #1}}
\definecolor{jade}{rgb}{0.10, 0.56, 0.42}
\definecolor{cerise}{rgb}{0.87, 0.19, 0.39}
\tikzset{>={latex[width=3mm,length=3mm]}}
\theoremstyle{definition}
\newtheorem*{thm*}{Theorem}
\newtheorem{thm}{Theorem}[section]
\newtheorem{definition}[thm]{Definition}
\newtheorem{conjecture}[thm]{Conjecture}
\newtheorem{lem}[thm]{Lemma}
\newtheorem{prop}[thm]{Proposition}
\newtheorem{corollary}[thm]{Corollary}
\newtheorem*{TDS}{Toral Deligne--Simpson Problem}
\newtheorem*{thm:mainthm}{Theorem \ref{thm:mainthm}}
\newtheorem*{thm:rigidthm}{Theorem \ref{thm:rigidthm}}
\theoremstyle{remark}
\newtheorem{rmk}[thm]{Remark}
\newtheorem{example}[thm]{Example}
\newcommand{\bl}[1]{\textcolor{blue}{#1}}
\newcommand{\N}{\Z_{>0}}
\newcommand{\Z}{\mathbb{Z}}
\newcommand{\C}{\mathbb{C}}
\newcommand{\pp}{\mathbb{P}^1}
\newcommand{\Ad}{\mathrm{Ad}}
\newcommand{\ad}{\mathrm{ad}}
\DeclareMathOperator{\ch}{char}
\newcommand{\sC}{\mathscr{C}}
\newcommand{\orb}{\mathscr{O}}
\newcommand{\rk}{\mathrm{rank}}
\newcommand{\cF}{\mathcal{F}}
\newcommand{\cB}{\mathcal{B}}
\newcommand{\cC}{\mathcal{C}}
\newcommand{\ftype}{\mathscr{A}}
\newcommand{\ftypes}{\mathcal{A}}
\newcommand{\DS}{\mathrm{DS}}
\newcommand{\bfA}{\mathbf{A}}
\newcommand{\bfO}{\mathbf{O}}
\DeclareMathOperator{\M}{\mathcal{M}}
\newcommand{\G}{{G}}
\DeclareMathOperator{\fg}{\mathfrak{g}}
\DeclareMathOperator{\fgl}{\mathfrak{gl}}
\DeclareMathOperator{\fsl}{\mathfrak{sl}}
\DeclareMathOperator{\GL}{GL}
\DeclareMathOperator{\SL}{SL}
\DeclareMathOperator{\Irr}{Irr}
\newcommand{\T}{T}
\newcommand{\B}{B}
\newcommand{\Stor}[1]{S^{{#1}}}
\newcommand{\symmetric}[1]{\mathfrak{S}_{{#1}}}
\newcommand{\Iwa}{I}
\newcommand{\Id}{\mathrm{id}}
\newcommand{\Galgp}{\mathcal{I}}
\newcommand{\Para}{P}
\newcommand{\ft}{\mathfrak{t}}
\newcommand{\fcox}{\mathfrak{c}}
\newcommand{\diag}{\mathrm{diag}}
\newcommand{\fs}{\mathfrak{s}}
\newcommand{\iwa}{\mathfrak{i}}
\newcommand{\fpara}{\mathfrak{p}}
\newcommand{\fp}{\mathfrak{p}}
\newcommand{\pow}{\mathfrak{o}}
\newcommand{\laur}{F}
\newcommand{\om}{\omega}
\newcommand{\Spec}{\mathrm{Spec}}
\newcommand{\nbr}[1]{[\fc]_{#1}}
\newcommand{\gc}{\nabla}
\newcommand{\fc}{\widehat{\gc}}
\newcommand{\prt}{\mathrm{Part}}
\newcommand{\prn}[2]{\lambda^{{#2},{#1}}}
\newcommand{\prp}[2]{\orb_{#2}^{#1}}
\newcommand{\orbp}[1]{\pi_{#1}}
\newcommand{\prinfilt}[1]{\langle{#1}\rangle}
\newcommand{\slope}{\mathrm{slope}}
\DeclareMathOperator{\Res}{\mathrm{Res}}
\DeclareMathOperator{\Tr}{Tr}
\DeclareMathOperator{\Lie}{\mathrm{Lie}}
\DeclareMathOperator{\im}{Image}
\newcommand{\Gm}{\mathbb{G}_m}
\newcommand{\la}[1]{\lambda_{#1}}
\newcommand{\m}[2]{\mu^{#1}_{#2}}
\newcommand{\flo}[1]{\lfloor{#1}\rfloor}
\newcommand{\dzz}{\frac{dz}{z}}
\newcommand{\tdzz}{\tfrac{dz}{z}}
\title{The Deligne--Simpson problem for connections on $\Gm$ with a maximally ramified singularity}
\author[M. Kulkarni]{Maitreyee C. Kulkarni}
\address{Mathematical Institute, University of Bonn, Bonn, Germany.}
\email{kulkarni@math.uni-bonn.de}
\author[N. Livesay]{Neal Livesay}
\address{Institute for Experiential AI, Northeastern University, Boston, MA.}
\email{n.livesay@northeastern.edu}
\author[J. Matherne]{Jacob P. Matherne}
\address{Mathematical Institute, University of Bonn, Bonn, Germany and Max Planck Institute for Mathematics, Bonn, Germany.}
\email{jacobm@math.uni-bonn.de}
\author[B. Nguyen]{Bach Nguyen}
\address{Department of Mathematics, Xavier University of Louisiana, New Orleans, LA.}
\email{bnguye22@xula.edu}
\author[D. S. Sage]{Daniel S. Sage}
\address{Department of Mathematics, Louisiana State University, Baton Rouge, LA.}
\email{sage@math.lsu.edu}
\subjclass[2020]{34M50, 14D05 (Primary); 22E67, 34M35, 14D24, 20G25 (Secondary)}
\begin{document}
  
\thanks{M.K. received support from the Charles Simonyi Endowment while
  at the Institute for Advanced Study, and from the Max Planck
  Institute for Mathematics in Bonn and the Hausdorff Research
  Institute for Mathematics in Bonn.  J.M. received support from NSF
  Grant DMS-1638352, the Association of Members of the Institute for
  Advanced Study, and the Hausdorff Research Institute for Mathematics
  in Bonn. B.N. received support from an AMS-Simons Travel Grant.
  D.S.S. received support from Simons Collaboration Grant 637367.}
 
\begin{abstract}
  The classical additive Deligne--Simpson problem is the existence
  problem for Fuchsian connections with residues at the singular
  points in specified adjoint orbits.  Crawley-Boevey found the
  solution in 2003 by reinterpreting the problem in terms of quiver
  varieties.  A more general version of this problem, solved by Hiroe,
  allows additional unramified irregular singularities.  We apply the
  theory of fundamental and regular strata due to Bremer and Sage to
  formulate a version of the Deligne--Simpson problem in which certain
  ramified singularities are allowed.  These allowed singular points
  are called toral singularities; they are singularities whose leading
  term with respect to a lattice chain filtration is regular
  semisimple.  We solve this problem in the special case of
  connections on $\Gm$ with a maximally ramified singularity at $0$
  and possibly an additional regular singular point at infinity.
  Examples of such connections arise from Airy, Bessel, and
  Kloosterman differential equations.  They play an important
  role in recent work in the geometric Langlands program.  We
  also give a complete characterization of all such connections which
  are rigid, under the additional hypothesis of unipotent monodromy at
  infinity.
 
\end{abstract}

\keywords{Deligne-Simpson problem, meromorphic connections, irregular
  singularities, moduli spaces, parahoric subgroups, fundamental
  strata, toral connections, rigid connections}
\maketitle

\tableofcontents

\section{Introduction}
\subsection{The classical Deligne--Simpson problem}
A fundamental concern in the study of meromorphic connections is the
existence problem for connections with specified singularities.  More
precisely, this problem poses the question: given points $a_1,\dots,a_m$ in $\pp$ and formal
connections $\fc_1,\dots,\fc_m$, does there exist a meromorphic
connection $\nabla$ which is regular away from the $a_i$'s and
satisfies $\nabla_{a_i}\cong\fc_i$ for all $i$?  The classical
Deligne--Simpson problem is a variant of this problem for Fuchsian
connections.

From now on, we assume that the underlying vector bundles of all
connections on $\pp$ are trivializable.  Without loss of generality, we assume that the collection of singular
points does not include $\infty$. A Fuchsian connection with singular points
$a_1,\ldots,a_m$ is defined by\begin{equation*} d+\Big(\sum_{i=1}^m\frac{A_i}{z-a_i}\Big)dz,
\end{equation*}
where $A_i\in\fgl_n(\C)$ for all $i$.   Note that the adjoint orbit of $A_i$
determines the formal isomorphism class at $a_i$. Since $\infty$ is not a singularity, the
residue theorem forces $\sum A_i=0$.  We say that the collection of
matrices $A_1,\dots,A_m$ is irreducible if they have no common
invariant subspaces besides $\{0\}$ and $\C^n$. We can now state the (additive)
Deligne--Simpson problem:
\begin{quote}
\emph{Given adjoint orbits $\orb_1,\dots,\orb_m$,
  determine whether there exists an irreducible $m$-tuple
  $(A_1,\dots,A_m)$ with $A_i\in\orb_i$ satisfying $\sum
  A_i=0$~\cite{Kostov03}.}
  \end{quote}
    In other words, when is there an irreducible Fuchsian
connection with residues in the given orbits?  Note that the original problem
considered by Deligne and Simpson was the multiplicative version,
where one looks for Fuchsian connections with monodromies in specified
conjugacy classes in $\GL_n(\C)$~\cite{Simpson}.  The additive version stated above was
originally formulated by Kostov, who solved it in the ``generic''
case. Crawley-Boevey gave a complete solution by
reinterpreting the problem in terms of quiver varieties~\cite{CB03}. 
We remark that while there is an obvious analogue of this problem for
arbitrary reductive $\G$, little is known about the solution outside of type $A$.

\subsection{The unramified Deligne--Simpson problem}
In order to generalize the Deligne--Simpson problem to allow for
irregular singularities, one considers connections with higher
order principal parts at the singularities:
\begin{equation}\label{eq:pp} d+\Big(\sum_{i=1}^m \sum_{\nu=0}^{r_i}
  \frac{A^{(i)}_{\nu}}{(z-a_i)^{\nu}}\Big)\dzz.
\end{equation}
Again, we assume that $\infty$ is not a singular point, so
$\sum_{i=1}^m A^{(i)}_{0}=0$.  We now require that the singularity at
each $a_i$ has a certain specified form called a "formal type".

Most previous work on the irregular Deligne--Simpson problem has
restricted attention to the  "unramified
case"~\cite{Kostov10,Boa08,HirYam,Hiroe}.  This means that at each singularity, the slope decomposition of the corresponding formal
connection only involves integer slopes.  More
concretely, each such formal connection has Levelt--Turrittin (LT)
normal form \begin{equation}\label{LTunram} d+(D_rz^{-r}+\dots+ D_1z^{-1}+R)\dzz,
\end{equation}
where the $D_i$'s are diagonal, $D_r\ne 0$, and the residue term $R$
is upper triangular and commutes with each $D_i$.  We view the
$1$-form $(D_rz^{-r}+\dots+ D_1z^{-1}+R)\dzz$ as an unramified formal
type.  In the regular singular case, one can take $R$ to be in Jordan
canonical form and view the formal type as $R\dzz$.

For Fuchsian connections, the principal part at a singular point is
just the residue.  Hence, in the classical Deligne--Simpson problem,
one requires that the principal part agrees with the formal type after
conjugation by a constant matrix (i.e., an element of $\GL_n(\C)$).
In other words, the principal part lies in the adjoint orbit of the
formal type.  For unramified formal types of positive slope, one
instead requires the principal part to lie in the orbit of the formal
type under a certain action of the group $\GL_n(\C[\![z]\!])$.  Let
$\cB_r=\{(B_r z^{-r}+\dots + B_0)\dzz\mid B_i\in\fgl_n(\C)\}$ denote
the space of principal parts of order at most $r$.  The group
$\GL_n(\C[\![z]\!])$ acts on $\cB_r$ by conjugation followed by
truncation at the residue term.  Note that this action factors through
the finite-dimensional group $\GL_n(\C[\![z]\!]/z^r \C[\![z]\!])$.  If
$\ftype$ is an unramified formal type of slope $r$, we call the orbit
$\orb_{\ftype}$ under this action the \emph{truncated orbit} of
$\ftype$.  If $\ftype$ has slope $0$, $\orb_{\ftype}$ may be
identified with the usual adjoint orbit of $\ftype/\dzz$.

We can now state the unramified irregular Deligne--Simpson problem:
Given points $a_i$ and unramified formal types $\ftype_i$ of slope $r_i$,
determine when there exists an irreducible connection as in
\eqref{eq:pp} whose principal part at each $a_i$ lies in $\orb_{\ftype_i}$. 
This problem can also be restated in the language of moduli spaces.
Given the formal types $\ftype_i$, one can consider the moduli space of
``framable'' connections on a rank $n$ trivial bundle whose
singularities have the specified formal types~\cite{HirYam}.  The
construction generalizes that of Boalch~\cite{Boa}, who assumes that
the $\ftype_i$ are all nonresonant, i.e., that the leading term of each
$\ftype_i$ is regular semisimple.  This moduli space is not necessarily
well-behaved, but it is a complex manifold if one restricts to the
\emph{stable} moduli space, i.e., the open subset consisting of irreducible
connections.  The unramified Deligne--Simpson problem is simply the
question of when such a stable moduli space is nonempty.

This problem was solved in 2017 by Hiroe~\cite{Hiroe}, building on
earlier work of Boalch~\cite{Boa08} and Hiroe and
Yamakawa~\cite{HirYam}.  As in the Fuchsian case, the proof involves
quiver varieties.  Hiroe uses the collection of unramified formal
types to define a certain quiver variety and identifies the stable
moduli space with a certain open subspace of the quiver variety.  He
then finds necessary and sufficient conditions for this open subspace
to be nonempty.  As a corollary, Hiroe shows that the stable moduli
space is a connected manifold as long as it is nonempty.

\subsection{The ramified Deligne--Simpson problem for toral connections}\label{S:ramDS}
In this paper, we introduce the study of the \emph{ramified
  Deligne--Simpson problem}, where ramified singularities are allowed.
A singularity is called \emph{ramified} if the associated formal
connection can only be expressed in LT normal form after passing to a
ramified cover. The LT normal form is thus no longer a suitable notion
of formal type for ramified singularities.  It is possible to
formulate the ramified Deligne--Simpson problem by replacing the LT
normal form with a ``rational canonical form'' for connections.  Such a form may be obtained from Sabbah's refined Levelt--Turrittin decomposition~\cite{Sab08}; we
will discuss this in a future paper.

Here, we only sketch the setup of the ramified Deligne--Simpson problem for a special class of irregular
connections called \emph{toral connections}.  Roughly speaking, a
formal connection is called toral if its leading term with respect to
an appropriate filtration satisfies a graded version of regular
semisimplicity. (The precise definition involves the theory of
fundamental and regular strata for connections introduced by Bremer
and Sage~\cite{BrSa1,BrSa3,BrSa5}.)  The terminology reflects the fact
that toral connections can be ``diagonalized'' into a (not necessarily
split) Cartan subalgebra of the loop algebra.

First, we describe formal types for toral connections.  A rank $n$
toral connection has slope $r/b$, where $b$ is a divisor of $n$ or
$n-1$ and $\gcd(r,b)=1$.  If $b>1$, define
$\om_b\in\fgl_b(\C(\!(z)\!))$ by
$\om_b=\sum_{i=1}^{b-1}e_{i,i+1}+ze_{b,1}$; i.e., $\om_b$ is the
matrix with $1$'s in each entry of the superdiagonal, $z$ in the
lower-left entry, and $0$'s elsewhere. If $b=1$, set $\om_b=z$.  Note
that $\om_b^b=z\Id$.  Given such a $b$ with $b\ell=n$
(resp. $b\ell=n-1$), we define a block-diagonal Cartan subalgebra
$\fs^b=\C(\!(\om_b)\!)^\ell$ (resp.
$\fs^b=\C(\!(\om_b)\!)^\ell\oplus \C(\!(z)\!)$).  There is a natural
$\Z$-filtration $\fs^b=\bigcup_i(\fs^{b})^i$ induced by assigning
degree $i$ to $\om_b^i$.  Let $\Stor{b}$ denote the corresponding
maximal torus in the loop group.

An \emph{$\Stor{b}$-formal type} of slope $r/b$ (with $\gcd(r,b)=1$)
is a $1$-form $A\dzz$, where $A\in (\fs^{b})^{-r}$ has regular
semisimple term in degree $-r$ and no terms in positive degree.  It is
a fact that any toral connection of slope $r/b$ is formally isomorphic
to a connection $d+A\dzz$ with $A\dzz$ an $\Stor{b}$-formal type; the
formal type is unique up to an action of the relative affine Weyl
group of $\Stor{b}$~\cite{BrSa1,BrSa5}.

In the case of unramified toral connections, $\Stor{1}=T(\C(\!(z)\!))$
is the usual diagonal maximal torus, and the $\Stor{1}$-formal types
of slope $r$ are those connection matrices in LT normal form
\eqref{LTunram} with $D_r$ regular (so that $R$ is necessarily $0$).
At the opposite extreme, $\cC\coloneqq\Stor{n}=\C(\!(\om_n)\!)^*$ is a
"Coxeter maximal torus".\footnote{Under the bijection between classes
  of maximal tori in $\GL_n (\C(\!(z)\!))$ and conjugacy classes in
  the Weyl group $\symmetric{n}$~\cite{KaLu88}, $\cC$ corresponds to
  the Coxeter class consisting of $n$-cycles.}  The $\cC$-formal types
of slope $r/n$ are the $1$-forms $p(\om_n^{-1})\dzz$, where $p$ is a
polynomial of degree $r$.

The unramified Deligne--Simpson problem involves global connections
which satisfy a stronger condition than just having specified formal types
at the singularities.  One also needs the local isomorphisms transforming
the matrices of the formal connections into the given formal types to satisfy a global
compatibility condition called ``framability''.  We now explain how this
condition can be generalized to toral formal types.

Recall (see, e.g., \cite{Sa00,BrSa1}) that the \emph{parahoric
  subgroups} of $\GL_n(\C(\!(z)\!))$ are the local field analogues of
the parabolic subgroups of $\GL_n(\C)$.  A parabolic subgroup is the
stabilizer of a partial flag of subspaces in $\C^n$, and a parahoric
subgroup is the stabilizer of a "lattice chain" of
$\C[\![z]\!]$-lattices in $\C(\!(z)\!)^n$.  If $\Para$ is a parahoric
subgroup with associated lattice chain $\{L^j\}_j$, then there is an
associated "lattice chain filtration" $\{\fpara^{i}\}_{i\in\Z}$ on
$\fgl_n (\C(\!(z)\!))$ defined by
$\fpara^{i}=\{X\mid X(L^j)\subset L^{j+i} \text{ }\forall j\}$.

To each maximal torus $\Stor{b}$, there is a unique "standard
parahoric subgroup" $\Para^b\subset \GL_n(\C[\![z]\!])$ with the
property that the corresponding filtration $\{(\fpara^{b})^i\}_i$ is
compatible with the filtration on $\fs^b$, in the sense that
$(\fs^{b})^i=(\fpara^{b})^i\cap\fs^b$ for all $i$~\cite{BrSa1}.  In
the unramified case, we have $\Para^1=\GL_n(\C[\![z]\!])$.  For the
Coxeter maximal torus $\Stor{n}$, the corresponding parahoric subgroup
is the standard ``Iwahori subgroup'' $\Iwa\coloneqq\Para^n$; i.e.,
$\Iwa$ is the preimage of the upper-triangular Borel subgroup $\B$
(consisting of all upper-triangular matrices in $\GL_n(\C)$) via the
map $\GL_n(\C[\![z]\!])\to\GL_n(\C)$ induced by the ``evaluation at
zero'' map $z\mapsto 0$.

The most natural way of describing framability involves coadjoint
orbits.  One can view the principal part at $0$ of a connection as a
continuous functional on $\fgl_n(\C[\![z]\!])$ via
$Y\mapsto\Res(\Tr(YX\dzz))$.  Similarly, an $\Stor{b}$-formal type can be
viewed as a functional on $\fp^b$.  The global connection $\gc$ then
is \emph{framable at $0$} with respect to the $\Stor{b}$-formal type $A\dzz$ at
$0$ if for some global trivialization, the restriction to $\fpara^b$ of
the principal part at $0$ lies in the $\Para^b$-coadjoint orbit of
$A\dzz$.  (See Definition~\ref{D:formaltype}.)

However, one can also give a description more reminiscent of the
definition in the unramified case.

\begin{definition} \label{D:formaltypealt}
Let $\gc$ be a global connection on $\pp$ with a singular point at $0$,
and let $\ftype=A\dzz$ be a toral formal type of slope $r/b$. We say
that $\gc$ is \emph{framable at $0$ with respect to $\ftype$} if \begin{enumerate}\item  under some global trivialization $\phi$, the matrix form $\gc=d+[\gc]_\phi\dzz$ satisfies $[\gc]_\phi\in (\fpara^{b})^{-r}$, and $[\gc]_\phi-A\in
(\fpara^{b})^{1-r}$; and
\item there exists an element $p\in (\Para^{b})^1$ such
that the nonpositive
truncation of $\Ad(p)[\gc]_\phi$ equals $A$.
\end{enumerate}
\end{definition}

Recall that $\GL_n(\C)$ acts simply transitively on the space of global trivializations.  If one starts with a fixed trivialization $\phi^\prime$, then the choice of trivialization $\phi$ in the definition above corresponds to an element $g\in\GL_n(\C)$; i.e., there is a unique $g$ such that $\phi=g\cdot\phi^\prime$.  This matrix $g$ is called a \emph{compatible framing} (or
simply, a \emph{framing}) of $\gc$
at $0$.  Framability with respect to a formal type at an arbitrary point
$a\in\pp$ is defined
similarly, by simply replacing $z$ by $z-a$ if $a$ is finite, and by
$z^{-1}$ if $a=\infty$.

We can now state the Deligne--Simpson problem for connections whose
irregular singularities are all toral.  Note that the statement below can easily be extended to
allow for arbitrary unramified singular points.
\begin{TDS} Let $\bfA=(\ftype_1,\dots,\ftype_m)$ be a collection of toral formal
  types at the points $a_1,\dots,a_m\in\pp$, and let
  $\bfO=(\orb_1,\dots,\orb_\ell)$ be a collection of adjoint orbits at
 other points $b_1,\dots,b_\ell\in\pp$.  Does there exist an
  irreducible rank $n$ connection $\gc$ such that \begin{enumerate}\item $\gc$ is
    regular away from the $a_i$'s and $b_j$'s;
  \item $\gc$ is framable at $a_i$  with respect to the formal type $\ftype_i$; and
    \item $\gc$ is regular singular at $b_j$ with residue in $\orb_j$?
    \end{enumerate}
  \end{TDS} 
 \noindent  If such a connection exists, we call it a ``framable connection''
  with the given formal types.

  This problem can be restated in terms of moduli spaces of
  connections.  Suppose that each $\orb_j$ is nonresonant; i.e., suppose that no pair of the eigenvalues of the orbit differ by a nonzero integer.  Further assume
  that $m\ge 1$, so that there is at least one irregular singular
  point.  In \cite{BrSa1}, Bremer and Sage constructed the moduli
  space $\M(\bfA,\bfO)$ of connections satisfying all the above
  hypotheses except irreducibility.  Let
  $\M_{\mathrm{irr}}(\bfA,\bfO)$ be the subset of the moduli space
  consisting of irreducible connections.  In this language, the toral Deligne--Simpson
  problem poses the question of when $\M_{\mathrm{irr}}(\bfA,\bfO)$
  is nonempty.

  \subsection{Coxeter connections}

  We now restrict attention to a simple special case: connections with
  a \emph{maximally ramified} irregular singularity and (possibly) an
  additional regular singular point.  Without loss of generality, we
  will view such connections as connections on $\Gm$ with the
  irregular singularity at $0$.  Following \cite{KS2}, we refer to
  such connections as \emph{Coxeter connections}.  Well-known
  classical examples arise from the Airy differential equation and a
  modified version of the Bessel equation.\footnote{In these
    connections and others described below, the irregular singularity
    is at $\infty$.}  Another important class of examples consists of
  the generalized Kloosterman connections studied by
  Katz~\cite{KatzKloosterman,KatzExponential}. These hypergeometric connections are the
  geometric incarnations of certain exponential sums called
  Kloosterman sums, which are of great importance in number theory.

  Coxeter connections and their $\G$-connection
  analogues (for $\G$ a simple algebraic group) have played a
  significant role in recent work in the geometric Langlands program.
  For example, Frenkel and Gross~\cite{FrGr}, building on work of
  Deligne~\cite{D70} and Katz~\cite{KatzKloosterman,KatzRigid},
  constructed a rigid $\G$-connection of this type.  This connection,
  which may be viewed as a $\G$-version of a modified Bessel
  connection, was the first connection with irregular singularities
  for which the geometric Langlands correspondence was understood
  explicitly~\cite{HNY13,Zhu17}.  This connection also arises in Lam
  and Templier's proof of mirror symmetry for minuscule flag
  varieties~\cite{LamTemp17}. Other examples include the Airy
  $\G$-connection  and more general rigid ``Coxeter $\G$-connections''
  constructed in \cite{KS2}.  The Airy $\G$-connection and its $\ell$-adic analogue have also been studied in \cite{JKY21}.

Recall that if $\fc$ is a rank $n$ formal connection, then every slope
of $\fc$ has denominator (when the slope is expressed in lowest form)
between $1$ and $n$.  We say $\fc$ is maximally ramified if all such
denominators (or equivalently, at least one) is $n$.  In this case,
all the slopes are the same --- say $r/n$ with $\gcd(r,n)=1$ --- and
equal to the slope of the connection.  More concretely, the leading
term of the LT normal form is of the form $D_rz^{-r/n}$ with $D_r$ a
constant diagonal matrix, and is necessarily regular.

It is shown in \cite{KS3} that maximally ramified connections are
toral connections with respect to a Coxeter maximal torus. Thus any
maximally ramified connection of slope $r/n$ has a rational canonical
form $d+p(\om_n^{-1})\dzz$, where $p$ is a polynomial of degree $r$,
and the set of formal types is given by $\{p(\om_n^{-1})\dzz\mid p\in\C[x], \deg(p)=r\}$.
Moreover, any such connection is irreducible.

In this paper, we solve the ramified Deligne--Simpson problem for
Coxeter connections.  More precisely, let $\ftype$ be a maximally
ramified formal type, and let $\orb$ be an adjoint orbit (which we
will always assume to be nonresonant).  We determine necessary and sufficient conditions for the existence of a
meromorphic connection $\gc$ on $\pp$ which is framable at $0$ with
formal type $\ftype$, is regular singular with residue in $\orb$ at
$\infty$, and is otherwise nonsingular.  Note that any such connection
is automatically irreducible, since its formal connection at $0$ is
irreducible.  Thus, in the language of moduli spaces, we determine
when $\M(\ftype,\orb)$ is nonempty.

In order to state our result, we need some facts about adjoint orbits.   
Fix a monic polynomial
$q=\prod_{i=1}^s(x-a_i)^{m_i}$ of degree $n$ with the $a_i$'s distinct
complex numbers.  The set $\{X\in\fgl_n(\C)\mid \ch(X)=q\}$ of
matrices with characteristic polynomial $q$ is a
closed subset of $\fgl_n(\C)$ which is stable under conjugation.  We
denote the set of orbits with characteristic polynomial $q$ by $\orbp{q}$.  This set is partially ordered under the
usual Zariski closure ordering: $\orb\preceq\orb'$ if and only if
$\orb\subset\overline{\orb'}$.  The theory of the Jordan canonical
form makes it clear that $\orbp{q}$ can be identified with the
Cartesian product $\prod_{i=1}^s\prt(m_i)$, where $\prt(m_i)$ denotes the set of partitions of $m_i$.  Moreover, this identification defines a poset isomorphism between the closure ordering and the
direct product of the dominance orders.

Given positive integers $r$ and $m$, there exists a unique smallest
partition $\prn{r}{m}\in\prt(m)$ with at most $r$ parts.  Define
$\prp{r}{q}$ to be the orbit in $\orbp{q}$ corresponding to the
element
\[(\prn{r}{m_1}, \prn{r}{m_2}, \ldots, \prn{r}{m_s})\in\prod_{i=1}^s
  \prt(m_i).\] This tuple of partitions is the (unique) smallest
element of $\prod_{i=1}^s \prt(m_i)$ such that each component
partition has at most $r$ parts.  Note that $\prp{1}{q}$ is just the
regular orbit in $\orbp{q}$.  On the other extreme, if $r\ge m_i$ for
all $i$ (as is the case when $r\ge n$), then $\prp{r}{q}$ is the
semisimple orbit, the unique minimal orbit in $\orbp{q}$.  Let
$\prinfilt{\prp{r}{q}}$ denote the principal filter generated by
$\prp{r}{q}$ in $\orbp{q}$, i.e., $\orb\in\orbp{q}$ satisfies
$\orb\in\prinfilt{\prp{r}{q}}$ if and only if $\orb\succeq\prp{r}{q}$.
This filter is proper unless $r\ge m_i$ for all $i$.  As we will see
in Theorem~\ref{thm:r-regular}, the collection of orbits $\prp{r}{q}$
for each fixed $r$ satisfies a generalization of one characterization
of regular orbits.

We can now give the solution to the Deligne--Simpson problem for
Coxeter connections.  Given a rank $n$ maximally ramified formal type
$\ftype$ and a monic polynomial $q$ of degree $n$ that is nonresonant (i.e.,
no two roots differ by a nonzero integer), let
\[\DS(\ftype, q)=\{\orb\in\orbp{q}\mid \M(\ftype, \orb) \neq \varnothing
  \}.\]

\begin{thm:mainthm}\label{mainthm}
Let $r$ and $n$ be positive integers with $\gcd(r,n)=1$, let $\ftype$ be a maximally ramified
  formal type of slope $r/n$, and let $q=\prod_{i=1}^s(x-a_i)^{m_i}\in\C[x]$ with $a_1,\ldots,a_s\in\C$ distinct modulo $\Z$.  Then
  \[
    \DS(\ftype, q)=
    \begin{cases}
      \prinfilt{\prp{r}{q}} & \text{if }\Res(\Tr(\ftype))=-\sum_{i=1}^sm_ia_i, \\
      \varnothing & \text{else}.
    \end{cases}
  \]
\end{thm:mainthm}

In other words, given $\ftype=p(\om^{-1})\dzz$, then $\M(\ftype,\orb)$ is
nonempty if and only if $n p(0)=-\Tr(\orb)$ and $\orb\succeq
\prp{r}{\ch(\orb)}$.  Concretely, the condition $\orb\succeq
\prp{r}{\ch(\orb)}$ means that
$\orb$ has at most $r$ Jordan blocks for each eigenvalue.

Note that the solution depends only on the slope and the residue of
the formal type.

\begin{rmk} If $r\ge m_i$ for all $i$, then $\DS(\ftype, q)=\orbp{q}$
  as long as the trace condition is satisfied.  In particular, this is
  the case if $r>n$.
\end{rmk}

There is an obvious analogue of this problem for $\SL_n$-connections
(as opposed to $\GL_n$-connections).  Here, maximally ramified
formal types are of the form $p(\om^{-1})\dzz$ with $p(0)=0$ and
$\Tr(\orb)=0$.  Thus, the trace condition becomes vacuous, and the
Deligne--Simpson problem has a positive solution if and only if $\orb\succeq
\prp{r}{\ch(\orb)}$.

One can define Coxeter $\G$-connections for any simple group $\G$ (or
for any reductive group with connected Dynkin diagram)~\cite{KS2}.
For such a $\G$, Coxeter toral connections have slope $r/h$, where $h$
is the Coxeter number for $\G$ and $\gcd(r,h)=1$.  Moreover, there is
an analogue of the Deligne--Simpson problem in this more general
context.  We restrict to the case where the regular singularity at
$\infty$ has nilpotent residue (and thus has unipotent monodromy).

\begin{conjecture} Let $\G$ be a simple complex group with Lie algebra $\fg$.  Fix a Coxeter
  $\G$-formal type $\ftype$ of slope $r/h$ with $\gcd(r,h)=1$.  Then there
  exists a nilpotent orbit $\orb^r\subset\fg$ such that the
  Deligne--Simpson problem for Coxeter $\G$-connections with initial
  data $\ftype$ and the nilpotent orbit $\orb$ has a positive solution
  if and only if $\orb\succeq\orb^r$.  Moreover, if $r>h$, then
  $\orb^r=0$, so the Deligne--Simpson problem always has a positive
  solution.
\end{conjecture}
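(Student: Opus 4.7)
The plan is to extend Theorem~\ref{thm:mainthm} from type $A$ to arbitrary simple $\G$ in four steps: identify the candidate orbit $\orb^r$, establish sufficiency by explicit construction, establish necessity by geometric constraints, and handle the $r > h$ case separately. For type $A$ the orbit $\orb^r$ is determined by the partition $\prn{r}{n}$. For general $\G$, I would define $\orb^r$ via the Bala--Carter classification as the nilpotent orbit with label $(L, e_L^{\mathrm{prin}})$, where $L$ is a standard Levi of $\G$ chosen by an arithmetic rule relating its Coxeter number $h_L$ to $h/r$ (for type $A$ this recovers $\prp{r}{x^n}$). In classical types this should reduce to a partition condition on the standard representation; in exceptional types one would tabulate $\orb^r$ for each $r < h$ with $\gcd(r,h)=1$ and cross-check against the rigidity predictions of \cite{KS2}.

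\textbf{Sufficiency.} Following the strategy of the $\GL_n$ case, I would construct an explicit connection realizing each allowed pair $(\ftype, \orb)$ with $\orb \succeq \orb^r$. Using the parahoric and Moy--Prasad theory of \cite{BrSa1,BrSa5,KS2}, write
\[ \gc = d + \bigl(X_{-r} + X_{-r+1} + \dots + X_0\bigr)\tdzz, \]
where the leading terms $X_{-r},\dots,X_{-r+h-1}$ project into the appropriate graded pieces of the Iwahori filtration so as to realize $\ftype$ at $0$, and the lower coefficients are tuned so that $\Res_\infty(\gc) = -\sum_i X_i$ is a chosen representative of $\orb^r$. To reach larger orbits $\orb \succeq \orb^r$, deform the subleading coefficients and invoke the closure ordering on the nilpotent cone via a transversal-slice argument.

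\textbf{Necessity.} Conversely, suppose $\orb \not\succeq \orb^r$. An Iwahori-level analysis of a framable connection at $0$ should impose Richardson-type constraints on the reduction at $\infty$, forcing the nilpotent residue there into a specific closed subset of the nilpotent cone. This should be matched with a dimension count: the expected dimension of $\M(\ftype, \orb)$, computed along the lines of \cite{BrSa1}, grows with $\dim \orb$, so nonemptiness forces $\dim \orb \ge \dim \orb^r$ and, combined with the Richardson constraint, puts $\orb$ into $\prinfilt{\orb^r}$. When $r > h$, the formal-type space has enough room to accommodate any nilpotent orbit at $\infty$, giving $\orb^r = 0$.

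\textbf{Main obstacle.} The hardest step is the necessity direction. The proof of Theorem~\ref{thm:mainthm} in type $A$ presumably relies on quiver-variety or partition-combinatorial machinery, neither available for general simple $\G$. A uniform argument should rest on intrinsic Lie-theoretic data --- perhaps the Kazhdan--Lusztig map from nilpotent orbits to Weyl-group conjugacy classes, exploiting that the Coxeter conjugacy class controls the formal type side --- together with a careful analysis of the parahoric moduli $\M(\ftype, \orb)$. Correctly identifying $\orb^r$ in the exceptional types is itself nontrivial and may require case-by-case verification.
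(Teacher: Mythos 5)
The statement you are addressing is stated in the paper as a \emph{conjecture}, not a theorem: the authors do not prove it, so there is no paper proof to compare your attempt against. The paper only proves the type~$A$ case (for $\GL_n$ and $\SL_n$) as Theorem~\ref{thm:mainthm} and Theorem~\ref{thm:rigidthm}, and explicitly leaves the general simple $\G$ case open.

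That said, your sketch is a research program rather than a proof, and several of its steps are genuinely unresolved. Defining $\orb^r$ via Bala--Carter with ``an arithmetic rule relating $h_L$ to $h/r$'' is a heuristic, not a definition; even in type~$A$, pinning down $\prp{r}{x^n}$ required a concrete combinatorial fact, namely that $\prn{r}{n}$ is the unique minimum of the filter of partitions with at most $r$ parts. Outside type~$A$ it is not even clear a priori that the set of attainable nilpotent orbits forms a principal filter --- that is precisely the nontrivial content of the conjecture, and your plan assumes it rather than establishing it. For sufficiency, the paper's type-$A$ argument rests on Proposition~\ref{Jordanform} (an explicit representative $N_r+D_q$ of $\prp{r}{q}$ supported on and below the $r$th subdiagonal), Corollary~\ref{residueresult} (an Iwahori coadjoint normal form for the residue), and Krupnik's completion theorem to climb the closure order; all three are specific to $\fgl_n$. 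Your ``transversal-slice argument'' is the right shape of idea for the last of these, but it would need a replacement for Krupnik's theorem valid in an arbitrary nilpotent cone, which is not automatic. For necessity, the paper's Lemma~\ref{lemmaatmostr} uses the Lie-theoretic characterization of $\cF^r$ in Theorem~\ref{thm:r-regular} (orbits meeting $V^r$), again with no obvious analogue for general~$\G$. Finally, your guess that the type~$A$ proof ``presumably relies on quiver-variety machinery'' is off the mark: the paper's argument is a direct parahoric-filtration and matrix-combinatorics argument and does not use quivers at all. In short, your plan identifies plausible landmarks, but none of the key steps is closed, consistent with the fact that the statement remains an open conjecture in the paper.
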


\subsection{Rigidity}
Our results have applications to the question of when Coxeter
connections are rigid.  Let $U\subset\pp$ be a nonempty open set, and
let $j:U\hookrightarrow\pp$ denote the inclusion.  A $\G$-connection
$\gc$ on $U$ is called \emph{physically rigid} if it is uniquely
determined by the formal isomorphism class at each point of
$\pp\setminus U$.  It is called \emph{cohomologically rigid} if
$H^1(\pp,j_{!*}\ad_{\gc})=0$.  For irreducible connections,
cohomological rigidity implies that $\gc$ has no infinitesimal
deformations.  For $\G=\GL_n(\C)$, it is a result of Bloch and Esnault
that cohomological and physical rigidity are the
same~\cite{BE04}.

In \cite{KS2}, Kamgarpour and Sage investigated the question of
rigidity for ``homogeneous'' Coxeter $\G$-connections with unipotent
monodromy.  A homogeneous Coxeter $\G$-formal type of slope $r/h$
(i.e., for $\GL_n$, a formal type of the form $a\om^{-r}_n\dzz$ with
$a\ne 0$) gives rise to a Coxeter $\G$-connection on $\Gm$ with
nilpotent residue at infinity.  They determined precisely when these
connections are (cohomologically) rigid, thus generalizing the work of Frenkel and
Gross~\cite{FrGr}. For $\GL_n$, it turns out that such connections are
rigid precisely when $r$ divides $n+1$ or $n-1$.

We can now generalize the results of \cite{KS2} to give a
classification of rigid Coxeter connections in type $A$.

\begin{thm:rigidthm}\label{rigidthm} Let $\ftype$ be a rank $n$ maximally ramified formal
  type of slope $r/n$, and let $\orb$ be any nilpotent orbit with
  $\orb\succeq \prp{r}{x^n}$.  Then there exists a rigid connection
  with the given formal type and and unipotent monodromy determined by
  $\orb$ if and only if $\orb=\prp{r}{x^n}$ and $r|(n\pm 1)$.
\end{thm:rigidthm}

We expect that the analogous statement is true for Coxeter $\G$-connections.

\subsection{Organization of the paper}

In \S\ref{prelim}, we discuss some facts about the poset of adjoint
orbits in $\fgl_n(\C)$ that will be needed in our applications. In
particular, we introduce and characterize a sequence of orbits which
generalize regular orbits.  In \S\ref{s:lattice}, we provide a brief
review of lattice chain filtrations.  In \S\ref{s:toral}, we describe
the role of these filtrations in studying formal connections,
following earlier work of Bremer and Sage~\cite{BrSa1, BrSa2, Sa17}.
In particular, we discuss toral connections and characterize maximally
ramified formal connections as Coxeter toral connections.  In
\S\ref{meromorphicconnections}, we describe moduli spaces of
connections with toral singularities and then state and prove our main
result on the Deligne--Simpson problem for Coxeter connections.  We
conclude the paper in \S\ref{s:rigidity} by characterizing rigid
Coxeter connections with unipotent monodromy at the regular singular
point.

\subsection*{Acknowledgements}The authors are deeply grateful to the American Institute of Mathematics (AIM) for hosting and generously supporting their research during SQuaRE meetings in 2020 and 2021.  Many of the key ideas in this paper were conceived during these meetings.  The authors would also like to thank an anonymous referee for helpful comments and suggestions that served to improve the manuscript.

\section{The poset of adjoint orbits}\label{prelim}

The solution to the Deligne--Simpson problem for Coxeter connections
involves certain distinguished orbits for the adjoint action of the
general linear group $\GL_n(\C)$ on $\fgl_n(\C)$ (i.e., similarity
classes of $n\times n$ complex matrices).  We will need some facts about
these adjoint orbits.

The set $\pi$ of adjoint orbits in $\fgl_n(\C)$ is partially ordered
via the closure order: $\orb\preceq\orb'$ if
$\orb\subseteq\overline{\orb'}$.  Let $\ch$ be the map sending a
matrix to its characteristic polynomial.  Given a monic degree $n$
polynomial $q$, $\ch^{-1}(q)$ is closed and $\GL_n(\C)$-stable.  If we
let $\orbp{q}$ be the set of adjoint orbits in $\ch^{-1}(q)$, it is
immediate that as a poset,
\begin{equation*} \pi=\bigsqcup_{\substack{\text{$q$ monic}\\
      \deg(q)=n}}\orbp{q}.
\end{equation*}

The theory of Jordan canonical forms allows us to identify the posets
$\orbp{q}$ with posets involving partitions.  If $n$ is a positive
integer, a \emph{partition of $n$} is a nonincreasing sequence
$\lambda=(\la{1},\la{2},\ldots,\la{m})$ of positive integers that sum
to $n$.  Each integer appearing in this sequence is called a part of
$\lambda$.  The total number of parts is denoted by $|\lambda|$.  It
will sometimes be convenient to use exponential notation for
partitions: if the $b_i$'s are the distinct parts, each appearing with
multiplicity $k_i$, we will also denote by $\lambda$ the multiset
$\{b_1^{k_1},\dots,b_s^{k_s}\}$.  Let $\prt(n)$ be the set of
partitions of $n$.  We view $\prt(n)$ as a poset via the dominance
order
  \begin{equation}\label{dom}
    \text{$\la{}\succeq\m{}{}\quad \iff\quad|\la{}|\le |\m{}{}|$
and $\sum_{i=1}^j\la{i}\ge\sum_{i=1}^j\m{}{i}$ for all
$j\in[1,|\la{}|]$}.
\end{equation}

Write $q=\prod_{i=1}^s(x-a_i)^{m_i}$ for distinct $a_1,\dots,a_s\in\C$
and $m_1,\ldots,m_s\in\N$.  The set $\orbp{q}$ can be identified with
$\prod_{i=1}^s\prt(m_i)$, where the partition of $m_i$ is given by the
sizes of the Jordan blocks with eigenvalue $a_i$.  It is well-known
that the closure order  corresponds to the product of the dominance
orders under this identification.

The unique maximal orbit in $\orbp{q}$ is the orbit with a single Jordan block for each eigenvalue.  This is
the regular orbit with characteristic polynomial $q$, i.e., the
unique orbit in $\orbp{q}$ of codimension $n$. 
We now define a sequence $\{\prp{r}{q}\}_r$
of orbits in $\orbp{q}$ which generalize the regular orbit.

Fix $r\in\N$, and consider the subset $\cF^r_q\subset\orbp{q}$
consisting of orbits with at most $r$ Jordan blocks for each
eigenvalue.  It is immediate from \eqref{dom} that $\cF^r_q$ is a filter
in the poset $\orbp{q}$.  This means that if $\orb\in\cF^r_q$ and $\orb\preceq\orb'$, then
$\orb'\in\cF^r_q$.  It turns out that $\cF^r_q$ is a principal filter;
i.e., there exists a (unique) element $\prp{r}{q}\in\orbp{q}$ such
that $\cF^r_q=\prinfilt{\prp{r}{q}}\coloneqq\{\orb\in\orbp{q}\mid \orb\succeq \prp{r}{q}\}$.  We will also set $\cF^r=\bigcup_q \cF^r_q$; it is a filter in $\pi$.  Note that $\cF^1$ is the set of all regular (or equivalently, maximal) adjoint orbits.

\begin{prop}  Given $n,r\in\N$, write $n=kr+r^{\prime}$ with
  $k,r^{\prime}\in\Z$ and $0\leq r^{\prime} < r$.  Then the partition
  \[\prn{r}{n}= \{(k+1)^{r^\prime}, k^{r-r^\prime}\}\] is the (unique)
  smallest partition of $n$ with at most $r$ parts. 
\end{prop}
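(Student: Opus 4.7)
The plan is to verify first that $\lambda\coloneqq\prn{r}{n}$ is indeed a partition of $n$ with at most $r$ parts, then to show that any other such partition dominates $\lambda$ in the sense of \eqref{dom}, and finally to deduce uniqueness from antisymmetry of $\preceq$. The initial verification is routine arithmetic: the entries of $\lambda$ sum to $r'(k+1)+(r-r')k = rk+r'=n$, and the total number of parts is $r$ when $k\ge 1$ and $r'<r$ when $k=0$, so in either case at most $r$.

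The heart of the argument is to check $\mu\succeq\lambda$ for an arbitrary partition $\mu$ of $n$ with at most $r$ parts. The inequality $|\mu|\le|\lambda|$ required by \eqref{dom} reduces to a short case split ($k\ge 1$ gives $|\lambda|=r\ge|\mu|$; $k=0$ gives $|\lambda|=n$, and any partition of $n$ has at most $n$ parts). For the partial sum inequalities I would first record the uniform closed form
\[
\sum_{i=1}^j\lambda_i \;=\; jk+\min(j,r')
\]
valid for every $j\in[1,|\lambda|]$, and then argue by contradiction. Suppose $\sum_{i=1}^j\mu_i \le jk+\min(j,r')-1$ for some such $j$. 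Since $\mu$ is weakly decreasing, $\mu_j$ is at most the average of the first $j$ entries; this average is strictly less than $k+1$, so integrality forces $\mu_j\le k$, and hence $\mu_i\le k$ for every $i\ge j$. Summing all parts of $\mu$ then yields
\[
n \;=\; \sum_{i=1}^{|\mu|}\mu_i \;\le\; \bigl(jk+\min(j,r')-1\bigr)+(|\mu|-j)k \;=\; |\mu|k+\min(j,r')-1 \;\le\; rk+r'-1 \;<\; n,
\]
a contradiction.

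Uniqueness is then immediate, since a minimum of a poset, when it exists, is unique by antisymmetry. I do not anticipate any serious obstacle here; the only mild subtlety is cleanly handling the two boundary cases $r'=0$ and $k=0$ simultaneously, and this is precisely what the uniform expression $jk+\min(j,r')$ is designed to accomplish.
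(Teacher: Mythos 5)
Your proof is correct, and it takes a genuinely different route from the paper's. The paper argues by showing that any partition $\lambda$ of $n$ with at most $r$ parts other than $\prn{r}{n}$ admits a strictly smaller partition with at most $r$ parts (via an explicit local move: split a part, or transfer a unit from the largest to the smallest part), so $\prn{r}{n}$ is the unique minimal element and hence the minimum. You instead verify the dominance inequality $\mu\succeq\prn{r}{n}$ directly for every competitor $\mu$, using the closed form $\sum_{i=1}^j \prn{r}{n}_i = jk+\min(j,r')$ and a clean averaging contradiction. Your argument avoids the case analysis on the shape of $\mu$ (largest part, smallest part, $u-v\ge 2$ vs.\ $u-v\le 1$) and also avoids the implicit appeal to the fact that a finite poset with a unique minimal element has that element as minimum; instead you get the minimum property directly. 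The tradeoff is that the paper's proof is constructive in a way yours is not (it exhibits the chain of decreasing steps), which can be useful for related algorithmic statements, whereas yours is shorter and more self-contained. One small point worth making explicit when you write this up: in the step $\mu_j\le k$ you should note that when $k=0$ this already gives the contradiction immediately (a part of a partition cannot be $\le 0$), so the final summation is only needed when $k\ge 1$; as written the argument is still correct but the reader may pause there.
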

\begin{proof}  Let $\lambda$ be any partition of $n$ with at most $r$
  parts, say with biggest part $u$ and smallest part $v$.  We will
  show that there exists a strictly smaller partition with at most $r$
  parts unless $\lambda=\prn{r}{n}$.

  If $r\ge n$, then $\prn{r}{n}=\{1^n\}$, the smallest element of
  $\prt(n)$, so the statement is trivial. We thus may assume that
  $r<n$.  First, suppose that $|\lambda|<r$.  It follows that $u\ge
  2$, so one obtains a strictly smaller partition with $|\lambda|+1\le
  r$ parts by replacing one $u$ with $u-1$ and adjoining a new part
  with value $1$.  We may thus assume without loss of generality that $|\lambda|=r$.

  Next, suppose that  $u-v\ge 2$.  Define a partition $\mu$ with the same
  parts as $\lambda$ except one $u$ is replaced by $u-1$ and one $v$
  is replaced by $v+1$.  It is obvious that $|\lambda|=|\mu|$ and that
  $\lambda$ is strictly bigger than $\mu$.

  It remains to consider the case $u-v\le 1$, so
  $\lambda=\{(v+1)^s,v^{r-s}\}$ for some $s$ with $0\le s <r$.  We
  then have $n=s(v+1)+(r-s)v=vr+s$, so $v=k$ and $s=r'$.  Thus,
  $\lambda=\prn{r}{n}$.
\end{proof}

Now, define $\prp{r}{q}$ to be the orbit in $\orbp{q}$ corresponding
to the element \[(\prn{r}{m_1}, \prn{r}{m_2}, \ldots,
  \prn{r}{m_s})\in\prod_{i=1}^s \prt(m_i).\]

\begin{corollary} The filter $\cF^r_q$ is principal with generator
  $\prp{r}{q}$, i.e., $\cF^r_q=\prinfilt{\prp{r}{q}}$.
\end{corollary}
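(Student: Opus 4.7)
The plan is to reduce the corollary to a componentwise statement via the product decomposition $\orbp{q}\cong\prod_{i=1}^s\prt(m_i)$, and then to package the proposition into the assertion that $\prn{r}{m_i}$ generates the principal filter of partitions of $m_i$ with at most $r$ parts.

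First, I would observe that under the Jordan-form identification, an orbit $\orb\in\orbp{q}$ corresponds to a tuple $(\lambda^{(1)},\dots,\lambda^{(s)})$ with $\lambda^{(i)}\in\prt(m_i)$, where $\lambda^{(i)}$ records the Jordan block sizes at eigenvalue $a_i$. Thus $\orb\in\cF^r_q$ if and only if $|\lambda^{(i)}|\le r$ for every $i$, so under the product decomposition
\[
\cF^r_q \;=\; \prod_{i=1}^s \bigl\{\lambda\in\prt(m_i) : |\lambda|\le r\bigr\}.
\]
Since the excerpt notes that the closure order on $\orbp{q}$ coincides with the product of dominance orders, it suffices to prove the single-partition statement: the set $P^r_m\coloneqq\{\lambda\in\prt(m) : |\lambda|\le r\}$ equals $\prinfilt{\prn{r}{m}}$ inside $\prt(m)$.

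Next I would verify both inclusions of this single-partition claim. For $\prinfilt{\prn{r}{m}}\subseteq P^r_m$, I note that $\mu\succeq\prn{r}{m}$ forces $|\mu|\le|\prn{r}{m}|\le r$ by the definition of dominance in \eqref{dom}, so $\mu\in P^r_m$. For the reverse inclusion $P^r_m\subseteq\prinfilt{\prn{r}{m}}$, I would apply the proposition just proved: it shows that $\prn{r}{m}$ is a minimum element of $P^r_m$, since any $\lambda\in P^r_m$ distinct from $\prn{r}{m}$ admits a strictly smaller element of $P^r_m$ below it; a standard descent argument (valid because $\prt(m)$ is finite) then yields $\lambda\succeq\prn{r}{m}$.

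Finally, I would combine the components: a principal filter in a product of posets is the product of the principal filters of the coordinates, so
\[
\cF^r_q \;=\; \prod_{i=1}^s \prinfilt{\prn{r}{m_i}} \;=\; \prinfilt{(\prn{r}{m_1},\dots,\prn{r}{m_s})} \;=\; \prinfilt{\prp{r}{q}},
\]
which is exactly the claim. There is no real obstacle here; the only subtle point is that the proposition as stated gives minimality of $\prn{r}{m}$ in $P^r_m$ rather than directly asserting that every element dominates it, but a short descent argument (or a direct comparison of partial sums) bridges the gap.
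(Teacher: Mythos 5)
Your proof is correct and follows essentially the same route as the paper's: reduce to the componentwise statement in $\prod_{i=1}^s\prt(m_i)$ via the Jordan-form identification, and then invoke the proposition. The paper simply declares this "immediate from the proposition," whereas you helpfully spell out the one point left implicit there — that the proposition directly gives minimality of $\prn{r}{m}$ in $P^r_m$, and a finite-descent argument (staying inside $P^r_m$ at each step, which the proposition's construction guarantees) upgrades this to the minimum property $\lambda\succeq\prn{r}{m}$ for every $\lambda\in P^r_m$.
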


\begin{proof}  It suffices to show that the corresponding filter in
  $\prod_{i=1}^s \prt(m_i)$ is principal with generator $(\prn{r}{m_1}, \prn{r}{m_2}, \ldots,
  \prn{r}{m_s})$.  This is immediate from the proposition.
\end{proof}

Recall that the semisimple orbit in $\orbp{q}$ is the unique minimal orbit.  If $r\ge m_i$ for all $i$ (as is the case when $r\ge n$),
then $\prp{r}{q}$ is the semisimple orbit so $\prinfilt{\prp{r}{q}}=\orbp{q}$.

We can now give a Lie-theoretic interpretation of $\cF^r$ which will be important in our applications.  Let $V^r$ be the set of matrices with nonzero entries on, and $0$'s below, the $r$th subdiagonal:
\begin{equation*} V^r=\{(x_{ij})\in\fgl_n(\C)\mid \text{$x_{ij}=0$ if $i-j>r$ and $x_{ij}\ne 0$ if $i-j=r$}\}.
\end{equation*}
Note that $V^r=\fgl_n(\C)$ if $r\ge n$.  It is well-known that every element of $V^1$ is regular and that every regular orbit has a representative in $V^1$.  (It is a famous result of Kostant that the analogous statement holds  for any complex simple group~\cite{Kos59}.)

We now prove a generalization of this result.
\begin{thm}\label{thm:r-regular} The adjoint orbits which intersect $V^r$ are precisely the orbits in $\cF^r$.  The minimal such orbits are the $\prp{r}{q}$'s.
\end{thm}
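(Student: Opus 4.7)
The theorem has two parts: the equality of orbits meeting $V^r$ with $\cF^r$, and the identification of the minimal such orbits as the $\prp{r}{q}$. The minimality claim is immediate from the structure already established: $\cF^r=\bigsqcup_q\cF^r_q$, each $\cF^r_q=\prinfilt{\prp{r}{q}}$ is a principal filter, and orbits with distinct characteristic polynomials are incomparable. For the easy inclusion---that every orbit meeting $V^r$ lies in $\cF^r$---I would argue by a rank lower bound: for any $X\in V^r$ and any $a\in\C$, the $(n-r)\times(n-r)$ submatrix of $X-aI$ obtained from rows $1,\ldots,n-r$ and columns $r+1,\ldots,n$ is lower triangular with diagonal entries $X_{1,1+r},\ldots,X_{n-r,n}$, all nonzero by the definition of $V^r$. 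Hence $\rk(X-aI)\ge n-r$, so $\dim\ker(X-aI)\le r$, and $X$ has at most $r$ Jordan blocks at each eigenvalue.

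For the reverse inclusion, given $\orb\in\cF^r_q$ with Jordan data $(a_i,\lambda^{(i)})_{i=1}^s$ and $|\lambda^{(i)}|\le r$, I would build an explicit representative in $V^r$ block by eigenvalue. First, for each $i$, I construct an $m_i\times m_i$ nilpotent $M^{(i)}\in V^r\subset\fgl_{m_i}(\C)$ of Jordan type $\lambda^{(i)}$. Then I form the block-diagonal matrix whose $i$th diagonal block is $a_iI_{m_i}+M^{(i)}$, and fill in $1$'s at the remaining cross-block positions on the $r$th superdiagonal. The result lies in $V^r$ and has characteristic polynomial $q$; since it is block upper-triangular with respect to the eigenvalue decomposition and the $a_i$ are distinct, the standard lemma for such matrices identifies each generalized $a_i$-eigenspace with that of the $i$th diagonal block, giving Jordan type $\lambda^{(i)}$ at $a_i$.

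The main obstacle is producing each $M^{(i)}$. Given a partition $\lambda$ of $m$ with at most $r$ parts and conjugate partition $\mu$ (so $\mu_k=\#\{j:\lambda_j\ge k\}$), I would order a basis of $\C^m$ column by column along the Young diagram of $\lambda$: the first $\mu_1$ basis vectors fill column $1$, the next $\mu_2$ fill column $2$, and so on. Let $N_0$ be the natural left-shift nilpotent sending each basis vector to the one in the same row of the previous column (and annihilating the column-$1$ vectors); by construction $N_0$ has Jordan type $\lambda$, but its displacements are $\mu_{k-1}$, which may be less than $r$, so $N_0$ need not lie in $V^r$. To correct this I would add a perturbation $E_{p-r,p}$ at each position $p>r$ for which the natural displacement is less than $r$, and call the resulting matrix $M^{(i)}$. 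To verify that $M^{(i)}$ still has Jordan type $\lambda$, let $F^k$ denote the span of the basis vectors in the first $k$ columns. Since $N_0$ drops the column index by exactly one and each added perturbation drops it by at least two, $M^{(i)}F^k\subseteq F^{k-1}$, giving $F^k\subseteq\ker(M^{(i)})^k$. A leading-term analysis in the column grading establishes the reverse inclusion, since for $v\notin F^k$ the top-column component of $v$ contributes a nonzero term to $(M^{(i)})^k v$ in a column that the other contributions cannot reach. Hence $\ker(M^{(i)})^k=F^k=\ker N_0^k$ for all $k$, so $M^{(i)}$ and $N_0$ have the same Jordan type, namely $\lambda$.
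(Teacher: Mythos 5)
Your treatment of the forward inclusion (every orbit meeting $V^r$ lies in $\cF^r$) and of minimality matches the paper's in substance: the paper observes that the bottom $n-r$ rows of $X-a\Id$ are linearly independent, whereas you isolate an invertible triangular $(n-r)\times(n-r)$ submatrix, but it is the same rank estimate. For the reverse inclusion you take a genuinely different route. The paper places $N_r+D_q$ in $V^r\cap\prp{r}{q}$ via Proposition~\ref{Jordanform} and then invokes Krupnik's completion theorem to reach an arbitrary $\orb\succeq\prp{r}{q}$ by adding a strictly upper-triangular matrix (which preserves membership in $V^r$). You instead build an explicit representative in $V^r$ for each orbit in $\cF^r$, avoiding the external completion result; the block assembly with cross-block superdiagonal $1$'s and the spectral-separation reduction to the single-eigenvalue case are sound, and this constructive route is an appealing alternative.

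There is, however, a gap in the verification that $M^{(i)}$ has Jordan type $\lambda$. The claim that each added perturbation $E_{p-r,p}$ drops the column index by at least two is false. If $e_p$ lies in column $k$, row $j$, and $\mu_{k-1}<r$, then $e_{p-r}$ sits in column $k-1$ (a drop of only one) precisely when $j>r-\mu_{k-1}$, which happens as soon as $r/2<\mu_{k-1}<r$ and $j$ is large enough. For instance, with $\lambda=(3,3,3)$, so $\mu=(3,3,3)$, and $r=4$, the added perturbations are $E_{1,5},E_{2,6},E_{3,7},E_{4,8},E_{5,9}$, and all but $E_{3,7}$ drop the column index by exactly one. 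Consequently the component of $(M^{(i)})^k v$ in column $c_{\max}-k$ is not $N_0^k v_{c_{\max}}$, and the leading-term argument as you state it does not yield $\ker(M^{(i)})^k\subseteq F^k$. The construction itself is fine, but the justification needs repair: each block $M_{c-1,c}\colon V_c\to V_{c-1}$ of $M^{(i)}$ in the column grading, together with any perturbations that land in column $c-1$, is the identity inclusion plus a strictly row-decreasing term, hence injective; since a length-$k$ product of such blocks can reach column $c_{\max}-k$ from column $c_{\max}$ only by dropping one column per step, the component of $(M^{(i)})^k v$ in column $c_{\max}-k$ is an injective image of $v_{c_{\max}}$, and then your argument concludes.
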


We begin by proving that $\prp{r}{q}$ intersects $V^r$.  Let $N_{r,n}$ be the $n\times n$ matrix with $1$'s
on the $r$th subdiagonal and $0$'s elsewhere.  We usually omit $n$
from the notation.

\begin{prop}\label{Jordanform}  Fix a positive integer $r<n$.  Let
$D_q=\diag(a_1,\dots,a_1,a_2,\dots,a_2,\dots,a_s,\dots,a_s)$, where
the eigenvalue $a_i$ appears with multiplicity $m_i$.  Let $U_r$ be
any matrix with all entries on the $r$th subdiagonal nonzero and all other
entries $0$.  Then $U_r+D_q\in \prp{r}{q}$.
\end{prop}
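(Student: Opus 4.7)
The plan is to exploit the natural $M$-invariant decomposition of $\C^n$ induced by the $r$-shift structure of $U_r$, where $M \coloneqq U_r + D_q$. For each $c \in \{1,\ldots, r\}$, let $W_c$ be the span of the standard basis vectors $e_j$ with $j$ in the arithmetic progression $c, c + r, c + 2r, \ldots$ (intersected with $\{1,\ldots,n\}$). Since $U_r e_j$ is a nonzero scalar multiple of $e_{j+r}$ (or zero when $j + r > n$), $U_r$ preserves each $W_c$; the diagonal matrix $D_q$ trivially does as well. Thus $\C^n = \bigoplus_{c=1}^r W_c$ is an $M$-invariant decomposition, and the Jordan structure of $M$ is the multiset union of the Jordan structures of the restrictions $M|_{W_c}$.

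The second step is to describe each $M|_{W_c}$ explicitly. With respect to the ordered basis $e_c, e_{c+r}, e_{c+2r}, \ldots$ of $W_c$, the restriction $M|_{W_c}$ is lower bidiagonal: its diagonal consists of the eigenvalues of $D_q$ occurring at positions $c, c+r, \ldots$, and its subdiagonal entries are the $r$th-subdiagonal entries of $U_r$ at those positions, which are nonzero by hypothesis. A lower bidiagonal matrix with nonzero subdiagonal is non-derogatory: after subtracting $\lambda I$, the first $\dim(W_c) - 1$ columns remain linearly independent (they form a lower triangular block with nonzero diagonal $s_1,\dots,s_{\dim(W_c)-1}$ on their lower support), so $\dim \ker(M|_{W_c} - \lambda I) \le 1$. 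Hence each distinct eigenvalue on the diagonal of $M|_{W_c}$ contributes a single Jordan block, whose size equals the diagonal multiplicity.

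Finally, I would count these multiplicities. Because $a_i$ occupies exactly $m_i$ consecutive positions of the diagonal of $D_q$, and $W_c$ selects positions in an arithmetic progression of common difference $r$, an elementary count shows that the number $\mu_c^{(i)}$ of appearances of $a_i$ along the diagonal of $M|_{W_c}$ is either $\lfloor m_i/r\rfloor$ or $\lceil m_i/r\rceil$. Writing $m_i = k_i r + r_i'$ with $0 \le r_i' < r$, the identity $\sum_{c=1}^r \mu_c^{(i)} = m_i$ forces exactly $r_i'$ of the columns to produce a block of size $k_i + 1$ and the remaining $r - r_i'$ to produce one of size $k_i$ (possibly of size $0$, when $m_i < r$). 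Assembling the Jordan blocks across all $r$ columns yields the partition $\{(k_i + 1)^{r_i'}, k_i^{r - r_i'}\} = \prn{r}{m_i}$ for the eigenvalue $a_i$, so $M \in \prp{r}{q}$ by definition of the latter.

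The main obstacle is the combinatorial bookkeeping of the last paragraph: once one confirms that each column $W_c$ picks up $a_i$ either $\lfloor m_i/r\rfloor$ or $\lceil m_i/r\rceil$ times, the totals force the distribution to match $\prn{r}{m_i}$ exactly. The non-derogatory argument and the invariant decomposition are both structural and routine; the real content is that the interaction between the $r$-step shift and the diagonal block sizes $m_i$ precisely realizes the minimal partition with at most $r$ parts.
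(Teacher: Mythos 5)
Your proof is correct, and it takes a genuinely different route from the paper's. The paper first reduces to the case $U_r = N_r$ via a diagonal conjugation, then argues by induction on the number $s$ of distinct eigenvalues: the base case $s = 1$ describes the Jordan strings $e_i \mapsto e_{i+r} \mapsto \cdots$ directly, and the inductive step passes to the $M$-invariant subspace $V = \mathrm{span}(e_{m_1+1},\dots,e_n)$, computes the induced map on the quotient $\C^n/V$, and explicitly lifts Jordan strings from the quotient by correcting them with powers of the (invertible) restriction to $V$. Your approach replaces the induction entirely with the global $M$-invariant decomposition $\C^n = \bigoplus_{c=1}^r W_c$ into residue classes mod $r$ --- which is essentially what the paper uses only in its base case $s=1$ --- and then handles all eigenvalues at once via the observation that each restriction $M|_{W_c}$ is lower bidiagonal with nonzero subdiagonal, hence non-derogatory, so its Jordan type is read off the diagonal multiplicities. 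The counting step (each interval of $m_i$ consecutive positions meets each residue class either $\lfloor m_i/r\rfloor$ or $\lceil m_i/r\rceil$ times, forced to be $\prn{r}{m_i}$ by the sum constraint) closes the argument. Your version avoids the Jordan-string lifting machinery of the inductive step, is somewhat more self-contained, and does not even need the initial reduction to $N_r$, since the non-derogatory argument is insensitive to the precise nonzero subdiagonal values; the paper's version, on the other hand, hands you explicit generalized eigenvectors, which can be useful if one wants a concrete Jordan basis rather than just the Jordan type.
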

\begin{proof}  It is easy to see that there exists an invertible
  diagonal matrix $t$ such that $\Ad(t)(U_r+D_q)=N_r+D_q$, so we may
  assume without loss of generality that $U_r=N_r$.  We prove the
  proposition by induction on the number of distinct eigenvalues $s$
  (for arbitrary $n$).

  If $s=1$, the partition for the single
  eigenvalue is $\lambda^{n,r}$.  Indeed, the Jordan strings for
  $N_r+D_q-a_1\Id=N_r$ are given by $e_i\mapsto e_{i+r}\mapsto
  e_{i+2r}\mapsto\cdots$ for $1\le i\le r$, with the strings of length
  $k+1$ for $1\le i\le r'$ and $k$ otherwise.

  Now assume $s>1$.  Let $V$ be the span of $e_{m_1+1},\dots,e_n$.  Note
  that $N_r+D_q$ stabilizes $V$; in fact,
  $(N_r+D_q)|_V=N_{r,n-m_1}+D_{\hat{q}}$, where
  $\hat{q}=\prod_{i=2}^s(x-a_i)^{m_i}$.  It is clear that the Jordan
  strings for $(N_r+D_q)|_V$ are also Jordan strings for $N_r+D_q$ corresponding to
  the eigenvalues $a_2,\dots,a_s$. Also, $N_r+D_q$ induces the endomorphism
  $N_{r,m_1}+D_{(x-a_1)^{m_1}}$ on $\C^n/V$.  It remains to show that
      each Jordan string
      $\bar{e}_i\mapsto
      \bar{e}_{i+r}\mapsto\dots\mapsto\bar{e}_{i+(\ell-1) r}\mapsto 0$
      for $N_{r,m_1}$ on $\C^n/V$ lifts to a Jordan string for the
      zero eigenvalues of $N_r+D_q- a_1\Id$.  (Here, $\ell$ is the
      smallest integer such that $i+\ell r>m_1$.)  Let
      $f=(N_r+D_q- a_1\Id)|_V$.  This map is invertible, so
      $e_i-f^{-\ell}(e_{i+\ell r}) \mapsto\dots\mapsto e_{i+(\ell-1)
        r}-f^{-1}(e_{i+\ell r})\in\ker(N_r+D_q- a_1\Id)$ is the desired
      lift.
    \end{proof}
 \begin{proof}[Proof of Theorem~\ref{thm:r-regular}] 
   We first show that $V^r$ consists of elements whose Jordan forms
   have at most $r$ blocks for each eigenvalue.  Let $X\in V^r$, and
   let $a$ be any eigenvalue of $X$.  The bottom $n-r$ rows of the
   matrix $X-a\Id$ are linearly independent, so $\rk(X-a\Id)\ge n-r$.
   We conclude that $\dim\ker(X-a\Id)\le r$.  Since this dimension is
   the number of Jordan blocks for the eigenvalue $a$, the claim
   follows.
  
   We have shown that the set of adjoint orbits intersecting $V^r$ is
   contained in $\cF^r$.  Now take $\orb\in\cF^r$.  Let $q$ be its
   characteristic polynomial, so $\orb\succeq\prp{r}{q}$.  Take any
   $X\in\prp{r}{q}$ as in Proposition~\ref{Jordanform}.  Then
   $X\in V^r$.  By a theorem of Krupnik~\cite[Theorem 1]{krupnik97},
   there exists a strictly upper triangular matrix $Z\in\fgl_n(\C)$
   such that $X+Z\in\orb$. Since $X+Z\in V^r$, this proves the
   theorem.
\end{proof}

\begin{rmk} Krupnik's approach does not lead to explicit constructions of orbit representatives in $V^r$.  However, at least for nilpotent orbits, 
it is possible to find explicit, simple representatives by means of an algorithm described in~\cite{KLMNS2}.
\end{rmk}

\section{Lattice chain filtrations}\label{s:lattice}

Let $\laur=\C(\!(z)\!)$ denote the field of formal Laurent series, and
let $\pow=\C[\![z]\!]$ denote the ring of formal power series.  An
\emph{$\pow$-lattice} $L$ in $F^n$ is a finitely generated
$\pow$-module with the property that
$L\otimes_{\pow}\laur \cong \laur^n$.  A \emph{lattice chain} in
$\laur^n$ is a collection $\{L^i\}_{i\in\Z}$ of lattices satisfying
the following properties:
\begin{enumerate}
\item $L^i\supsetneq L^{i+1}$ for all $i$; and
\item there exists a positive integer $e$, called the \emph{period}, such that $L^{i+e} = zL^i$ for all $i$.
\end{enumerate}
A \emph{parahoric subgroup} $\Para\subset\GL_n(\laur)$ is the
stabilizer of a lattice chain; i.e.,
$\Para=\{g\in\GL_n(\laur) \mid gL^i = L^i\text{ for all }i\}$.  We
write $e_{\Para}$ to denote the period of the lattice chain stabilized
by $\Para$.  A lattice chain
in $F^n$ is called \emph{complete} if its period is as large as
possible, i.e., if its period equals $n$.  The parahoric subgroups
associated to complete lattice chains are called \emph{Iwahori
  subgroups}.

Each lattice chain $\{L^i\}_{i\in\Z}$ --- say with corresponding
parahoric $\Para$ --- determines a filtration $\{\fpara^i\}_{i\in\Z}$
of $\fgl_n(\laur)$ defined by
$\fpara^i = \{X\in\fgl_n(\laur) \mid XL^j \subset L^{j+i}\text{ for
  all }j\}$; in particular, $\fpara^0=\fpara\coloneqq\Lie(\Para)$.
One also gets a filtration $\{\Para^i\}_{i\in\Z_{\ge 0}}$ of $\Para$
defined by $\Para^0=\Para$ and $\Para^i=1+\fpara^i$ for $i>0$.  In the
special case that each lattice in the lattice chain stabilized by
$\Para$ admits an $\pow$-basis of the form $\{z^{k_j}e_j\}_{j=1}^n$,
the corresponding filtration $\{\fpara^i\}_i$ is induced by a grading
$\{\fpara(i)\}_i$ on $\fgl_n(\C[z,z^{-1}])$.

\begin{rmk}\label{building} The ``lattice chain filtrations'' described above may also be obtained through a more general construction. Let $\cB$ be the (reduced) Bruhat--Tits building associated to the loop group $\GL_n(\laur)$.  This is a simplicial complex whose simplices are in bijective correspondence with the parahoric subgroups of the loop group.  
For each point in the building, there is an associated
``Moy--Prasad filtration'' on the loop algebra~\cite{MP}.  Up
to rescaling,
the lattice chain filtration determined by a parahoric subgroup $\Para$ is the Moy--Prasad
filtration associated to the barycenter of the simplex in the
Bruhat--Tits building corresponding to $\Para$~\cite{BrSa3}.

The building $\cB$ is the union of $(n-1)$-dimensional real affine spaces called apartments, which are in one-to-one correspondence with split maximal tori in $\GL_n(\laur)$. The parahoric subgroups described in the "special case" above --- i.e., the parahoric subgroups $\Para$ where each lattice in the lattice chain stabilized by $\Para$ admits an $\pow$-basis of the form
$\{z^{k_j}e_j\}_{j=1}^n$ --- are precisely the parahoric subgroups corresponding to the simplices in the "standard apartment", i.e., the apartment corresponding to the diagonal torus.
\end{rmk}

We will focus our attention on two particular parahoric subgroups of $\GL_n(\laur)$: $\GL_n(\pow)$ and the "standard Iwahori subgroup" $\Iwa$ (defined below).  The parahoric subgroup $\GL_n(\pow)$ is the stabilizer of the $1$-periodic lattice chain $\{z^i\pow^n\}_{i}$.
The associated filtration on $\fgl_n(\laur)$ is the \emph{degree
  filtration} $\{z^i\fgl_n(\pow)\}_{i\in\Z}$.
  
  The \emph{standard Iwahori subgroup} $\Iwa$ is the stabilizer of the \emph{standard
  complete lattice chain} in $F^n$, i.e., the lattice chain
\[\{\pow\text{-span}\{z^{\flo{\frac{i}{n}}}e_1,
z^{\flo{\frac{i+1}{n}}}e_2, \ldots,
z^{\flo{\frac{i+(n-1)}{n}}}e_n\}\}_{i\in\Z},\] where $\{e_j\}_{j=1}^n$
is the standard basis for $F^n$.  The associated filtration is the
\emph{standard Iwahori filtration} $\{\iwa^i\}_{i\in\Z}$.  Note that if
$\B\subset\GL_n(\C)$ is the Borel subgroup of invertible upper
triangular matrices, then $\Iwa$ is the preimage of $\B$ under the homomorphism
$\GL_n(\pow)\to\GL_n(\C)$ induced by the "evaluation at zero" map $z\mapsto 0$.

Define $\om\in\fgl_n(\laur)$ by
$\om=\sum_{i=1}^{n-1}e_{i,i+1}+ze_{n,1}$; i.e., $\om$ is the
matrix with $1$'s in each entry of the superdiagonal, $z$ in the
lower-left entry, and $0$'s elsewhere.  Then
$\om^i\iwa^j=\iwa^j\om^i=\iwa^{i+j}$ \cite[Proposition 1.18]{Bu}
for all $i,j\in\Z$.  Note that $\iwa^{i+n}=z\iwa^i$ for all $i$. Let
$\T$ denote the standard diagonal maximal torus in $\GL_n(\C)$.  Then $\ft=\Lie(\T)$ consists of all diagonal matrices in
$\fgl_n(\C)$. The standard Iwahori grading $\{\iwa(i)\}_{i\in\Z}$ is
then given by \begin{equation}\label{iwahorigrading}\iwa(i)=\om^i\ft=\ft\om^i.\end{equation}

\begin{example}
  Let $n=3$.  Then
  \[\om = \begin{bmatrix} 0 & 1 & 0 \\ 0 & 0 & 1 \\ z & 0 & 0 \end{bmatrix}\and\Iwa=\begin{bmatrix} \pow^* & \pow & \pow \\ z\pow & \pow^* & \pow \\ z\pow & z\pow & \pow^* \end{bmatrix}.\]
  Some steps in the standard Iwahori filtration on $\fgl_n(\laur)$ are shown below:
  \[
    \iwa^{-2}=\begin{bmatrix} \pow & z^{-1}\pow & z^{-1}\pow \\ \pow &
      \pow & z^{-1}\pow \\ \pow & \pow &
      \pow \end{bmatrix}\quad\subsetneq\quad \iwa^{-1}=\begin{bmatrix}
      \pow & \pow & z^{-1}\pow \\ \pow & \pow & \pow \\ z\pow & \pow &
      \pow \end{bmatrix}\quad\subsetneq\quad \iwa^{0}=\begin{bmatrix}
      \pow & \pow & \pow \\ z\pow & \pow & \pow \\ z\pow & z\pow &
      \pow \end{bmatrix}.\]
\end{example}

If $\mathfrak{h}\subset\fgl_n(\laur)$ is any $\C$-subspace containing
some $\fp^i$, we denote the space of continuous linear functionals on
$\mathfrak{h}$ by $\mathfrak{h}^\vee$. Every continuous functional on  $\mathfrak{h}$ extends to a continuous functional on $\fgl_n(\laur)$, so $\mathfrak{h}^\vee\cong\fgl_n(\laur)^\vee/\mathfrak{h}^\perp$, where $\mathfrak{h}^\perp$ denotes the annihilator of $\mathfrak{h}$. The space of $1$-forms $\Omega^1(\fgl_n(\laur))$ can be identified with the space of functionals $\fgl_n(\laur)^\vee$ by associating a $1$-form $\nu$ with the functional $Y\mapsto\Res(\Tr(Y\nu))$. This identification is well-behaved with
respect to lattice chain filtrations~\cite[Proposition 3.6]{BrSa3}:
\begin{equation*} (\fpara^i)^\vee\cong
  \fgl_n(\laur)\tfrac{dz}{z}/\fpara^{-i+1}\tfrac{dz}{z}.
\end{equation*}
When the filtration comes from a grading, one can be even more
explicit.  In particular, we
get  \begin{equation}\label{duals}\fgl_n(\pow)^\vee\cong\fgl_n(\C[z^{-1}]) \tfrac{dz}{z}\qquad
  \text{and} \qquad \iwa^\vee\cong\ft[\om^{-1}]\tfrac{dz}{z}.
\end{equation}

For applications to connections, it is important to consider the
relationship between filtrations on Cartan subalgebras of
$\fgl_n(\laur)$ and filtrations on parahoric subalgebras
~\cite{BrSa1,BrSa5}.

Note that, since $\laur$ is
not algebraically closed, it is not true that all maximal tori (or
equivalently, all Cartan subalgebras) are conjugate.  In fact, there
is a bijection between the set of conjugacy classes of maximal tori in $\GL_n(\laur)$ and the set of
conjugacy classes in the Weyl group for $\GL_n(\C)$ (i.e., the symmetric group $\symmetric{n}$)~\cite[Lemma 2]{KaLu88}.  We also remark that each Cartan subalgebra $\fs$ comes equipped with a natural filtration $\{\fs^i\}_{i\in\Z}$ (see, e.g., \cite[Section 3]{BrSa5}).  In the cases of interest to us in this paper, the filtration will be induced by a grading $\{\fs(i)\}_{i\in\Z}$.

Let $\Stor{}$ be a maximal torus and let $\Para$ be a parahoric subgroup, both in $\GL_n(\laur)$.  Let $\fs=\Lie(\Stor{})$ be the Cartan subalgebra associated to $\Stor{}$.  We say that $\Stor{}$ and $\Para$ (or $\fs$ and $\fpara$) are
\emph{compatible} (resp. \emph{graded compatible}) if
$\fs^i=\fpara^i\cap\fs$ (resp. $\fs(i)=\fpara(i)\cap\fs$) for all
$i$.  For present purposes, it suffices to consider two examples: the diagonal subalgebra $\ft(\laur)$ and the
``standard Coxeter Cartan subalgebra'' $\fcox=\C(\!(\om)\!)$.  The
diagonal Cartan subalgebra $\ft(\laur)$ (corresponding to the trivial class in
the Weyl group $\symmetric{n}$) is endowed with a filtration which comes
from the obvious grading $\ft(\C[z,z^{-1}])=\bigoplus_i z^i\ft$.  It is
immediate that $\T(\laur)$ is graded compatible with $\GL_n(\pow)$.

At the opposite extreme, there is a unique class of maximal tori in
$\GL_n(\laur)$ that are anisotropic modulo the center, meaning that
they have no non-central rational cocharacters.  Concretely, such tori
are as far from being split as possible.  This class corresponds to
the Coxeter class in $\symmetric{n}$, i.e., the class of $n$-cycles.  A specific
representative of this class is the standard Coxeter torus
$\cC=\C(\!(\om)\!)^*$ with Lie algebra $\fcox$.  Note that $\om$
is regular semisimple --- its eigenvalues are the $n$ distinct $n$th
roots of $z$ --- so its centralizer $\C(\!(\om)\!)$ is indeed
a Cartan subalgebra.  The natural grading by powers of $\om$ on
$\C[\om,\om^{-1}]$ induces a filtration on $\fcox$, and it is clear
that $\cC$ is graded compatible with $\Iwa$.

\section{Toral and maximally ramified connections}\label{s:toral}
\subsection{Formal connections}
A \emph{formal connection of rank $n$} is a connection $\fc$ on an
$\laur$-vector bundle $V$ of rank $n$ over the formal punctured disk
$\Spec(\laur)$.  Given a trivialization $\phi$ for $V$ (which is always trivializable), the connection
can be written in matrix form as $\fc=d+\nbr{\phi}$, where
$\nbr{\phi}\in\Omega^1_{\laur}(\fgl_n(\laur))$.  The loop group
$\GL_n(\laur)$ acts simply transitively on the set of trivializations
via left multiplication.  The corresponding action of $\GL_n(\laur)$
on the connection matrix is given by the \emph{gauge action}: if
$g\in\GL_n(\laur)$, then $g\cdot\nbr{\phi} = \nbr{g\cdot\phi} =
\Ad(g)(\nbr{\phi})-(dg)g^{-1}$.  Hence, the set of isomorphism classes
of rank $n$ formal connections is isomorphic to the orbit space
$\fgl_n(\laur)\dzz/\GL_n(\laur)$ for the gauge action.

A formal connection $\fc$  is called \emph{regular singular} if the
connection matrix with respect to some trivialization has
a simple pole.  If the matrix has a higher order pole for every
trivialization, $\fc$ is said to be
\emph{irregular singular}.  Katz defined an invariant of formal
connections called the \emph{slope} which gives one measure of the
degree of irregularity of a formal connection~\cite{D70}.  The slope is a
nonnegative rational number whose denominator in lowest form is at
most $n$.  The slope is positive if and only if $\fc$ is irregular.

\subsection{Fundamental strata}
The classical approach to the study of formal connections involves an analysis of the ``leading term'' of the connection matrix with respect to the degree filtration on $\fgl_n(\laur)$ \cite{Was}.  To review, suppose that the matrix for $\fc$ with respect to $\phi$ is expanded with respect to the degree filtration on $\fgl_n(\laur)$; i.e., suppose
\begin{equation}\label{degreeexpansion}[\fc]_\phi=(M_{-r}z^{-r}+M_{-r+1}z^{-r+1}+\cdots)\tdzz,\end{equation}
where $r\ge 0$ and $M_i\in\fgl_n(\C)$ for all $i$.  When the
\emph{leading term} $M_{-r}$ is well-behaved, it gives useful
information about the connection.  For example, if $M_{-r}$ is
non-nilpotent, then $\slope(\fc)=r$.  Moreover, if $r>0$ and $M_{-r}$
is diagonalizable with distinct eigenvalues, then $\fc$ can be
diagonalized into a ``$\T(\laur)$-formal type of depth $r$''.  This
means that there exists $g\in\GL(\laur)$ such that $[\fc]_{g\phi}$ is
an element of
\[\ftypes (\T(\laur),r)\coloneqq\{(D_{-r}z^{-r}+\cdots+D_1z+D_0)\tdzz
  \mid \forall i,D_i\in\ft \text{ and }D_{-r}\text{ has distinct
    eigenvalues}\}\] \cite{Was}.  Note that many interesting
connections have nilpotent leading terms.  For example, the leading term of
the formal Frenkel--Gross connection
$\fc_{\text{FG}}=d+\om^{-1}\dzz$ \cite{FrGr} is strictly upper
triangular (and thus nilpotent).  In fact, the leading term is
nilpotent no matter what trivialization one chooses for $V$.

More recently, Bremer and Sage --- borrowing well-known tools from
representation theory developed by Bushnell \cite{Bu}, Moy--Prasad
\cite{MP}, and others --- have introduced a more general approach to
the study of formal connections, where leading terms are replaced by
``strata'' \cite{BrSa1,BrSa5,BrSa3}.  A \emph{$\GL_n$-stratum} is a
triple $(\Para, r, \beta)$ with $\Para\subset \GL_n(\laur)$ a
parahoric subgroup, $r$ a nonnegative integer, and $\beta$ a
functional on $\fpara^r/\fpara^{r+1}$.  Consider the special case
where $\Para$ corresponds to a simplex in the standard apartment (see
Remark~\ref{building}).  Here, a functional
$\beta\in(\fpara^r/\fpara^{r+1})^\vee$ can be written uniquely as
$\beta^\flat\dzz$ for $\beta^\flat$ homogeneous (i.e., for
$\beta^\flat\in\fpara(-r)$). The stratum is called \emph{fundamental}
if $\beta^\flat$ is non-nilpotent.  A formal connection $\fc$
\emph{contains the stratum $(\Para, r, \beta)$} (with respect to a
fixed trivialization) if $\fc = d + X\dzz$ with $X\in\fpara^{-r}$ and
$\beta$ induced by $X\dzz$.  More general definitions of fundamental
strata and stratum containment are given in \cite{BrSa1, BrSa3}.

Fundamental strata can be viewed as a generalization of the notion of
a non-nilpotent leading term.  In particular, fundamental strata can
be used to compute the slope of any connection, not merely those with
integer slopes.  Recall that if $\Para\subset\GL_n(\laur)$ is a
parahoric subgroup, then $e_{\Para}$ denotes the period of the lattice
chain stabilized by $\Para$.

\begin{thm} [{\cite[Theorem 4.10]{BrSa1}, \cite[Theorem
    1]{Sa17}}] \label{T:slope} Any formal connection $\fc$ contains a
  fundamental stratum.  If $\fc$ contains the fundamental stratum
  $(\Para, r, \beta)$, then $\slope(\fc)=r/e_{\Para}$.\end{thm}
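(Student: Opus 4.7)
The proof has two independent components: first, the existence of a fundamental stratum contained in $\fc$; second, the derivation of the slope formula once such a stratum is in hand. For existence, I would argue by a descent through parahoric subgroups of $\GL_n(\laur)$, starting from the hyperspecial $\GL_n(\pow)$ and refining toward the standard Iwahori $\Iwa$ as needed. Fix a trivialization $\phi$ and write $\fc = d + X\dzz$ with expansion \eqref{degreeexpansion}. Let $r_0 \ge 0$ be minimal with $X \in z^{-r_0}\fgl_n(\pow)$. If the leading coefficient $M_{-r_0}$ is non-nilpotent, then $(\GL_n(\pow), r_0, \beta)$, where $\beta$ is the functional induced by $X\dzz$, is a fundamental stratum containing $\fc$ and we are done. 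If $M_{-r_0}$ is nilpotent, I would apply a gauge transformation built from the Jordan structure of $M_{-r_0}$ together with a suitable affine Weyl group element to either lower the depth at $\GL_n(\pow)$, or move to a strictly finer parahoric filtration whose new graded leading term is non-nilpotent. Since the periods of parahorics in $\GL_n(\laur)$ are bounded above by $n$, the refinement process must terminate.

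For the slope formula, one extracts $\slope(\fc)$ directly from the data $(\Para, r, \beta)$. The non-nilpotency of $\beta^\flat \in \fpara^{-r}/\fpara^{-r+1}$ means that, after a further gauge transformation or equivalently after passing to a ramified cover $z = t^{e_\Para}$ along which $\Para$ becomes compatible with the standard diagonal Cartan, the connection matrix acquires a regular semisimple leading term at depth $r/e_\Para$ in the $t$-adic filtration. Matching this against the Levelt--Turrittin normal form, whose slope can be read off the degrees of its diagonal entries, yields $\slope(\fc) = r/e_\Para$. A complementary argument shows the answer is well defined: the Katz slope is a gauge invariant of $\fc$, so any two fundamental strata $(\Para, r, \beta)$ and $(\Para', r', \beta')$ contained in $\fc$ must satisfy $r/e_\Para = r'/e_{\Para'}$.

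The principal obstacle is the existence step, specifically certifying that the refinement procedure reaches a fundamental stratum rather than cycling among strata whose leading coefficients remain nilpotent in every finer graded piece. Ruling this out requires a structural input: for any nonzero $X \in \fgl_n(\laur)$ there must exist a point of the Bruhat--Tits building $\cB$ (cf.\ Remark~\ref{building}) at which the associated Moy--Prasad graded leading term of $X$ is non-nilpotent. In the approach of Bremer and Sage, this is essentially a finite combinatorial statement about simplices in $\cB$ combined with a loop-algebra version of the Jordan decomposition, and it adapts the theory of minimal strata from the representation theory of $\GL_n(\laur)$ to the setting of connections. Once termination is secured, the slope formula follows from the direct computation described above.
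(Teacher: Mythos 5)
The paper cites this theorem from \cite[Theorem~4.10]{BrSa1} and \cite[Theorem~1]{Sa17} without reproducing a proof, so there is no in-paper argument to compare against. Your sketch does track the broad Bremer--Sage strategy (a Moy--Prasad-style descent through points of the Bruhat--Tits building, adapting the theory of minimal $K$-types to connections), and you are candid that the existence step outsources a structural input. That said, two steps as written are not correct.

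First, the termination argument is incomplete. The fact that parahoric periods are bounded by $n$ only shows that the possible depths $r/e_\Para$ have bounded denominators. By itself this does not force the process to stop: you also need (i) that any \emph{non}-fundamental stratum contained in $\fc$ can be replaced by one of strictly smaller depth, and (ii) that depths are nonnegative, so that the resulting set of depths is a discrete subset of $\Q_{\ge 0}$ bounded below. Item (i) is precisely where the loop-algebra analogue of the nilpotent-orbit analysis enters and is the genuine content of the existence proof; your sketch gestures at it (``lower the depth at $\GL_n(\pow)$, or move to a strictly finer parahoric'') but then attributes termination to the period bound alone, which is not what makes the induction close.

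Second, your derivation of the slope formula asserts that after the ramified cover $z = t^{e_\Para}$ the connection matrix ``acquires a regular semisimple leading term.'' This does not follow: a fundamental stratum only has a \emph{non-nilpotent} graded leading term, and this remains the correct invariant after the cover. For instance, with $\Para = \GL_3(\pow)$ and $\fc = d + \diag(1,1,0)\,z^{-1}\tfrac{dz}{z}$, the leading term $\diag(1,1,0)$ is non-nilpotent but not regular, yet the stratum is fundamental and the slope is $1 = r/e_\Para$. The slope formula $\slope(\fc)=r/e_\Para$ is still true, but the comparison to Levelt--Turrittin must proceed by splitting by eigenvalues of the nonzero semisimple part of the leading term and recursing on the blocks, not by claiming regular semisimplicity. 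This is a repairable but genuine error in the written justification.
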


We now investigate some examples.  The connection in
(\ref{degreeexpansion}) (with $M_i\in\fgl_n(\C)$ for all $i$) contains
the stratum $(\GL_n(\pow), r, M_{-r}z^{-r}\dzz)$, which is fundamental
if and only if $M_{-r}$ is non-nilpotent.  The formal Frenkel--Gross
connection $\fc_{\text{FG}}$ contains the fundamental stratum
$(\Iwa, 1, \om^{-1}\dzz)$.  Moreover, any rank $n$ formal
connection of the form $\fc=d+(a\om^{-r}+X)\dzz$, with
$\gcd(r,n)=1$, $a\in\C^*$, and $X\in\iwa^{-r+1}$, contains the
fundamental stratum $(\Iwa,r,a\om^{-r}\dzz)$, and thus has slope
$r/n$.

\subsection{Toral connections}
The notion of a diagonalizable leading term with distinct eigenvalues
is generalized by the notion of a ``regular stratum''.  For
simplicity, we only consider the case where $\Para$ comes from the
standard apartment (see Remark~\ref{building}).  General definitions can be found in
\cite{BrSa1} and \cite{BrSa5}.  For such a $\Para$, consider the stratum $(\Para,r,
\beta^\flat\dzz)$.  If $\beta^\flat$ is regular semisimple, then its
centralizer $C(\beta^\flat)$ is regular semisimple, and we say that  $(\Para,r,
\beta^\flat\dzz)$ is a $C(\beta^\flat)$-regular stratum.  A connection
that contains an $\Stor{}$-regular stratum is called an \emph{$\Stor{}$-toral connection}.

It turns out that toral connections do not exist for every maximal
torus $\Stor{}$.  In fact, an $\Stor{}$-toral connection of slope $s$
exists if and only if $\Stor{}$ corresponds to a regular conjugacy
class in $\symmetric{n}$ (in the sense of Springer~\cite{Spr74}) and
if $e^{2\pi i s}$ is a regular eigenvalue of this conjugacy
class~\cite{BrSa5}.  The regular classes are parametrized by the
partitions $\{b^{n/b}\}$ (for positive divisors $b$ of $n$) and
$\{b^{(n-1)/b},1\}$ (for positive divisors $b$ of $n-1$).
Representatives for each of the corresponding conjugacy classes of
maximal tori are given by the $\Stor{b}$'s defined in \S\ref{S:ramDS}.
An $\Stor{b}$-toral connection has slope $r/b$ for some $r>0$ with
$\gcd(r,b)=1$.  Note that $\Stor{1}$ is the diagonal torus $\T(\laur)$
while $\Stor{n}$ is the standard Coxeter torus $\cC$.

Just as for connections whose naive leading term is regular
semisimple, there exist ``rational canonical forms'' for toral
connections involving the notion of a formal type.  Fix a divisor $b$
of either $n$ or $n-1$.  We define the set of \emph{$\Stor{b}$-formal
  types of depth $r$} (with $\gcd(r,b)=1$) by
\begin{equation*} \ftypes(\Stor{b},r)=\{A\tdzz\mid A=\sum_{i=0}^r A_{-i}\in
  \bigoplus_{i=0}^r \fs^b(-i)\text{ with $A_{-r}$ regular semisimple}\}.
\end{equation*}
Every toral connection $\fc$ of slope $r/b$ is formally isomorphic to a
connection of the form $d+A\tdzz$ with $A\tdzz\in \ftypes(\Stor{b},r)$; we view
this as a rational canonical form for $\fc$.

We will need a more precise variation of this
statement.  As mentioned in \S\ref{S:ramDS}, for each $b$, there is a
standard parahoric subgroup $\Para^b$ which is compatible (in fact, graded
compatible) with $\Stor{b}$.
\begin{thm}[{\cite[Theorem 4.13]{BrSa1}, \cite[Theorem 5.1]{BrSa5}}]\label{T:diag}
  Suppose $\fc$ is a formal connection containing an $\Stor{b}$-regular
  stratum $(\Para^b, r,\beta^\flat\dzz)$ with
  $\beta^\flat\in\fs^b(-r)$.  Then there exists $p\in\Para^{b,1}$ such
  that $p\cdot\nbr{}$ is a formal type in $\ftypes(\Stor{b},r)$ whose
  component in $\fs^b(-r)\dzz$ is $\beta^\flat\dzz$.
\end{thm}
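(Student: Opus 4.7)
The plan is to construct $p$ as a convergent product $p = \cdots \exp(u_2) \exp(u_1)$, where each $u_k \in \fpara^{b,1}$ lies in a successively deeper layer of the filtration and is chosen to cancel one unwanted graded component of the connection matrix without disturbing the degree $-r$ leading term $\beta^\flat$. For $u \in \fpara^b(k)$ with $k \geq 1$, the gauge action of $\exp(u)$ on $X\dzz$ modifies the matrix $X$ by
\[
  [u, X] - du/\dzz + \text{(higher-order terms in $u$)},
\]
where $[u, \beta^\flat] \in \fpara^b(k-r)$ and $-du/\dzz \in \fpara^b(k)$ are the two leading contributions; the degree $-r$ slot is untouched since $k - r > -r$, so the prescribed leading term survives every step.

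The key structural input is a splitting principle for the regular semisimple element $\beta^\flat \in \fs^b(-r)$. Because $\fs^b$ is graded-compatible with $\fpara^b$, the adjoint map
\[
  \ad(\beta^\flat) : \fpara^b(m+r) \to \fpara^b(m)
\]
has kernel exactly $\fs^b(m+r)$ and image complementary to $\fs^b(m)$. Thus each graded piece decomposes as $\fpara^b(m) = \fs^b(m) \oplus \ad(\beta^\flat)\bigl(\fpara^b(m+r)\bigr)$, and the ``non-$\fs^b$'' part of any element of $\fpara^b(m)$ is of the form $[\beta^\flat, u]$ for some $u \in \fpara^b(m+r)$. This splitting rests crucially on $\beta^\flat$ being regular semisimple in the appropriate graded sense.

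I then proceed by induction on $m = -r+1, -r+2, \ldots$. At stage $m$, suppose that the connection has already been gauged into a form in which the components in degrees $-r, -r+1, \ldots, m-1$ all lie in $\fs^b$, with the degree $-r$ component still equal to $\beta^\flat$. Decompose the current degree $m$ component as $X_m = X_m^{\fs} + X_m^{\perp}$ via the splitting above. For $m \leq 0$, I need only kill $X_m^{\perp}$: choose $u \in \fpara^b(m+r) \subset \fpara^{b,1}$ with $[u, \beta^\flat] = -X_m^{\perp}$, and apply $\exp(u)$. For $m > 0$, I additionally must kill the surviving $X_m^{\fs}$; here I take $u \in \fs^b(m) \subset \fpara^{b,1}$, for which $[u, \beta^\flat] = 0$ but $-du/\dzz$ is a nonzero scalar multiple of $u$ (for the graded basis one computes $d(\om_b^m)/\dzz = (m/b)\, \om_b^m$, giving a nonzero scalar as soon as $m > 0$), allowing me to choose $u$ that cancels $X_m^{\fs}$. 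In both cases, the transformation fixes all components in degrees below $m$ and perturbs components in degrees above $m$ only within strictly deeper filtration.

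Composing these steps yields a sequence of partial products whose $u_k$'s have filtration depths tending to infinity, so the infinite product converges in the $\fpara^b$-adic topology and defines a well-defined $p \in \Para^{b,1}$. By construction $p \cdot \nbr{}$ has nonzero graded components only in $\fs^b(-i)$ for $0 \leq i \leq r$, with degree $-r$ component exactly $\beta^\flat$, so it is a formal type in $\ftypes(\Stor{b}, r)$ with the prescribed leading term. The main obstacle is establishing the graded splitting principle in the required form for every graded-compatible pair $(\fs^b, \fpara^b)$; once that is in hand, the induction and convergence argument proceed uniformly, and the higher-order Ad contributions $\tfrac{1}{2}[u,[u,X]]+\cdots$ together with the nonlinear part of $-(d\exp(u))\exp(-u)$ cause no trouble because each such correction lives in strictly deeper filtration than the linear change being used at the current stage.
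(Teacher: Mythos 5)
The paper does not prove this theorem itself---it cites it from Bremer--Sage---but your successive-approximation strategy is the standard route, and the structural inputs you identify are all correct: the graded splitting $\fpara^b(m)=\fs^b(m)\oplus\ad(\beta^\flat)\bigl(\fpara^b(m+r)\bigr)$ is precisely what the regular-stratum hypothesis supplies, and the two kinds of moves (one inside $\fs^b$, one via $\ad(\beta^\flat)$) together with convergence in the parahoric filtration give the infinite product $p\in\Para^{b,1}$.

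There is, however, a concrete error at the heart of the $m>0$ step. The identity $d(\om_b^m)/\tfrac{dz}{z}=(m/b)\,\om_b^m$ is false as a statement about matrices over $\C(\!(z)\!)$: with $b=2$, $m=1$ one has $\om_2=e_{12}+ze_{21}$ and $d\om_2/\tfrac{dz}{z}=ze_{21}$, which is not a scalar multiple of $\om_2$ and does not even lie in $\fs^2=\C(\!(\om_2)\!)$. What is true (and what you actually need) is that the $\fs^b(m)$-component of $d(\om_b^m)/\tfrac{dz}{z}$ with respect to the graded splitting is $(m/b)\,\om_b^m$, while the complementary component lands in $\ad(\beta^\flat)\bigl(\fpara^b(m+r)\bigr)$ and is nonzero in general. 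As a result, your concluding assertion that ``in both cases, the transformation fixes all components in degrees below $m$ and perturbs components in degrees above $m$ only'' fails for the $\fs^b(m)$ move: it reintroduces an off-$\fs^b$ term in degree $m$ itself. The repair is straightforward and must be made explicit: at stage $m>0$, perform the $\fs^b(m)$ move before the $\ad(\beta^\flat)$ move (or follow it with an extra $\ad(\beta^\flat)$ cleanup in the same degree), noting that the derivative produced by the $\ad(\beta^\flat)$ move sits in degree $m+r>m$ and so does not re-pollute degree $m$. With that correction the induction, the convergence, and your treatment of the higher-order terms are all sound.
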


\subsection{Maximally ramified connections}

\begin{definition}  A formal connection $\fc$ of rank $n$ is called
  \emph{maximally ramified} if it has slope $r/n$ with $\gcd(r,n)=1$.
\end{definition}

Thus, a formal connection is maximally ramified if the denominator of
the slope (in lowest terms) is as big as possible.  Another
interpretation involves the slope decomposition of $\fc$. It is a well-known result of Turrittin~\cite{Tur55} and Levelt~\cite{Lev75} that after extending scalars to $\C(\!(z^{1/b})\!)$ for some $b\in\N$, there exists a trivialization in which the matrix of $\fc$ is block-diagonal: 
\begin{equation*}
\fc=d+\diag(p_1(z^{-1/b})\Id_{m_1}+R_1,\dots,p_k(z^{-1/b})\Id_{m_k}+R_k)\tdzz;
\end{equation*}
here, the $p_i$'s are polynomials and the $R_i$'s are nilpotent matrices.  This is the Levelt--Turrittin normal form of $\fc$.   The \emph{slopes} of $\fc$ are the $n$ rational numbers $\deg(p_i)/b$, each appearing with multiplicity $m_i$.  This collection of invariants gives more detailed information about how irregular $\fc$ is than the single invariant $\slope(\fc)$.  Indeed, one can define $\slope(\fc)$ to be the maximum of the
slopes of $\fc$.

One can show that the slopes are  nonnegative rational numbers with
denominators at most $n$.  Moreover, if at least one slope is $r/n$ with
$\gcd(r,n)=1$, then all slopes are $r/n$.  Thus, $\fc$ is
maximally ramified if at least one slope has denominator $n$.

It turns out that maximally ramified connections are the same thing as
Coxeter toral connections.  If we specialize our results on formal
types to the standard Coxeter torus $\cC$, we see that the
$\cC$-formal types of depth $r$ are given by
\begin{equation*}
\ftypes (\cC,r)=\{p(\om^{-1})\tdzz \mid
p\in\C[x], \deg(p)=r\}.
\end{equation*}

We thus obtain the following result on rational canonical forms for maximally ramified connections:

\begin{thm}[{\cite{KS3}}]  Let $\fc$ be a maximally ramified connection
    of slope $r/n$ with $\gcd(r,n)=1$.  Then $\fc$ is formally gauge
    equivalent to a connection of the form $d+p(\om^{-1})\dzz$ with
    $p$ a polynomial of degree $r$.
  \end{thm}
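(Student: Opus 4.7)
The strategy is to produce a $\cC$-regular stratum contained in $\fc$ and then invoke Theorem~\ref{T:diag}. Applying Theorem~\ref{T:slope} yields a fundamental stratum $(\Para,s,\beta)$ in $\fc$ with $r/n = \slope(\fc) = s/e_{\Para}$. Since every parahoric of $\GL_n(\laur)$ has period $e_{\Para}\le n$ and $\gcd(r,n)=1$, this forces $e_{\Para}=n$ and $s=r$, so $\Para$ is an Iwahori. Conjugating by a suitable element of the loop group, I may assume $\Para = \Iwa$, so that $\fc$ contains a fundamental stratum $(\Iwa, r, \beta^\flat\dzz)$ with $\beta^\flat\in\iwa(-r)$.

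Next, I would upgrade this to a $\cC$-regular stratum. By \eqref{iwahorigrading}, $\iwa(-r)=\ft\om^{-r}$, so $\beta^\flat = D\om^{-r}$ for some diagonal $D = \diag(d_1,\dots,d_n)$. Using $\om^n = zI$ and the cyclic permutation of $\ft$ under $\Ad(\om)$, together with $\gcd(r,n)=1$, a short calculation gives $(D\om^{-r})^n = (d_1\cdots d_n)z^{-r}I$. Fundamentality forces $d_1\cdots d_n \ne 0$, and since $\gcd(r,n)=1$ the polynomial $x^n - (d_1\cdots d_n)z^{-r}$ is Eisenstein (hence irreducible) over $\laur$. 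Consequently $\beta^\flat$ is regular semisimple with centralizer a degree-$n$ anisotropic (Coxeter-type) Cartan subalgebra. An additional conjugation by an element of $\Iwa$, using transitivity of the relative affine Weyl group on the Coxeter Cartan subalgebras compatible with $\Iwa$, moves this centralizer to the standard $\fcox$, so that $(\Iwa, r, \beta^\flat\dzz)$ becomes a $\cC$-regular stratum.

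Finally, Theorem~\ref{T:diag} applied with $\Stor{b}=\cC$ and $\Para^b=\Iwa$ provides a gauge transformation bringing $\fc$ to the form $d + A\dzz$ with $A\dzz\in\ftypes(\cC,r)$. Since $\ftypes(\cC,r)=\{p(\om^{-1})\dzz\mid p\in\C[x],\ \deg(p)=r\}$ as recorded in Section~\ref{s:toral}, this yields the claimed rational canonical form. The principal obstacle lies in the middle paragraph: while the eigenvalue computation is elementary, arranging that the centralizer of $\beta^\flat$ coincides with the standard $\fcox$ --- rather than merely some Coxeter Cartan conjugate to it --- requires care with the relative affine Weyl action inside $\Iwa$.
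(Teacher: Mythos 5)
Your proposal follows the paper's strategy exactly: pass via Theorem~\ref{T:slope} to a fundamental stratum, force the parahoric to be Iwahori from $\gcd(r,n)=1$, recognize the leading term as regular semisimple, and then invoke Theorem~\ref{T:diag}. The one place you diverge --- and where you correctly flag your own uncertainty --- is the step of moving $\beta^\flat$ into $\fcox(-r)$.

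Two comments on that middle step. First, the ``Eisenstein'' language is slightly off since $x^n - (d_1\cdots d_n)z^{-r}$ does not have $\pow$-coefficients; what matters for regular semisimplicity is only that the roots $\zeta^j\,a^{1/n}z^{-r/n}$ (with $a=\prod d_i\ne 0$) are distinct, which is how the paper phrases it. Irreducibility over $\laur$ is a stronger fact controlling the anisotropy of the centralizer, but it is not needed to apply Theorem~\ref{T:diag}. Second, you do not need the abstract transitivity principle you invoke (and ``relative affine Weyl group'' is not quite the right object for conjugating one compatible Cartan to another; what you would want is a $\Iwa$-conjugacy statement). The paper replaces this with an explicit computation: since $\Ad(\om^{-r})$ cyclically permutes the entries of $\ft$ and $\gcd(r,n)=1$ makes that a single $n$-cycle, one can directly solve for a diagonal $t\in\T$ with $\Ad(t)(D\om^{-r}) = a^{1/n}\om^{-r}$. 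Because $\T$ normalizes $\Iwa$, this is a legitimate change of trivialization, and it lands $\beta^\flat$ squarely in $\fcox(-r)$, so Theorem~\ref{T:diag} applies with $\Para^b=\Iwa$ and $\Stor{b}=\cC$. This explicit conjugation neatly dissolves the obstacle you point to, with no need for a general structure theorem about compatible Coxeter Cartans.
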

  
  This theorem may be obtained as a direct corollary of Sabbah's refined Levelt--Turrittin decomposition~\cite[Corollary 3.3]{Sab08}.  Indeed, since $\fc$ has slope $r/n$, Sabbah's theorem shows that it is formally isomorphic to a connection of the form $[n]_*(d+d\phi+\lambda)$, where $[n]:\Spec(\laur) \to \Spec(\laur)$ is the $n$-fold covering induced by $z\mapsto z^n$, $\phi\in z^{-1}\C[z^{-1}]$ has degree $r$, and $\lambda\in\C$.  It is now easy to conclude that $\fc$ has the desired rational canonical form, with the coefficients of $p$ determined by $\lambda$ and the coefficients of $\phi$.
  
  However, the theory of toral connections allows one to prove a generalized version of this theorem for $\G$-connections, where  $\G$ is a reductive group with connected
  Dynkin diagram~\cite{KS3}.  Here, $n$ is replaced by the Coxeter number $h$,
  $\cC$ is an appropriate fixed Coxeter torus in $\G(\laur)$,
  and $\fc$ is formally isomorphic to $d+\ftype$, where $\ftype$ is a
  $\cC$-formal type of slope $r/h$.  Below, we provide a concise
  stratum-theoretic proof for the specific case $\G=\GL_n$, which is simpler than the general proof.

  \begin{proof} By Theorem~\ref{T:slope}, $\fc$ contains a fundamental
    stratum $(\Para,r,\beta')$ with respect to some trivialization.  Since
    $\slope(\fc)=r/e_{\Para}$, it follows that $e_{\Para}=n$ and $\Para$ is an Iwahori
    subgroup.  By equivariance of stratum containment and the fact
    that Iwahori subgroups are all conjugate, we may modify the
    trivialization so that $\fc$ contains the fundamental stratum
    $(\Iwa,r,\beta)$.  The functional $\beta$ is represented by
    $\beta^\flat\dzz$, where $\beta^\flat\in\iwa(-r)$ is
    non-nilpotent.

    By \eqref{iwahorigrading}, $\beta^\flat=\diag(a_1,\dots,a_n)\om^{-r}$ for
    some constants $a_i$.  Let $a=\prod a_i$.  Since
    $\ch(\beta^\flat)=x^n- az^{-r}$, it follows that $a\ne 0$.  The
    polynomial thus has distinct roots, and $\beta^\flat$ is regular
    semisimple.  Let $a^{1/n}$ be a fixed $n$th root of $a$.  It is
    easy to see that there exists $t\in \T$ such that
    $\Ad(t)(\beta^\flat)=a^{1/n}\om^{-r}$, and since $t$ normalizes $\Iwa$, $\fc$ contains the stratum  $(\Iwa,r, a^{1/n}\om^{-r}\dzz)$. By
    Theorem~\ref{T:diag}, $\fc$ is formally isomorphic to
    $d+(b_r\om^{-r} + \dots + b_0)\dzz$, where $b_r=a^{1/n}$.
  \end{proof}

  \section{The Deligne--Simpson problem for Coxeter connections}\label{meromorphicconnections}
\subsection{Moduli spaces of connections with toral singularities}
We now turn our attention to meromorphic connections $\gc$ on a rank $n$
trivializable vector bundle $V$ over the complex Riemann sphere
$\pp$. To discuss the Deligne--Simpson problem, we need to define what
it means for $\gc$ to be framable at a singularity with respect to a given toral formal type.
We will assume that the singular point is $0$.  The only modification
needed if the singularity is at an arbitrary point $a\in \pp$ is to replace the uniformizer
$z$ by $z-a$ if $a$ is finite and by $z^{-1}$ if $a=\infty$.

Fix a trivialization $\phi$ of $V$, and write $\gc=d+[\gc]_\phi$.  The
principal part of $[\gc]_\phi$ is an element of
$\fgl_n(\C[z^{-1}]) \tfrac{dz}{z}$, and so may be viewed as a
continuous functional on $\fgl_n(\pow)$ by \eqref{duals}.  Similarly,
the restriction of $[\gc]_\phi$ to $\fpara^b$ is uniquely determined
by the truncation of $[\gc]_\phi/\dzz$ to
$\bigoplus_{i=0}^\infty \fpara^b(-i)$.  Thus, if $\ftype=A\dzz$ is an
$\Stor{b}$-formal type, then $\ftype$ may naturally be viewed as an
element of $(\fpara^b)^\vee$.

\begin{definition}\label{D:formaltype} Let $\gc=d+[\gc]_\phi$ be a global connection on $\pp$ with a singular point at $0$,
and let $\ftype=A\dzz$ be an $\Stor{b}$-formal type of depth $r$. We say
that $\gc$ is \emph{framable at $0$ with respect to $\ftype$} if \begin{enumerate}\item  there
  exists $g\in \GL_n(\C)$ such that $[\gc]_{g\phi}=g\cdot [\gc]_\phi\in(\fpara^{b})^{-r}$ and
  $[\gc]-A\in(\fpara^{b})^{1-r}$, and
\item there exists an element $p\in (\Para^{b})^1$ such that $\Ad^*(p)([\gc]_{g\phi}\dzz)|_{\fpara^b}=\ftype$.
\end{enumerate}
\end{definition}

It is a consequence of Theorem~\ref{T:diag} that this definition is
equivalent to Definition~\ref{D:formaltypealt}.

Fix two disjoint subsets $\{a_1,\dots,a_m\}$ and
$\{b_1,\dots,b_\ell\}$ of $\pp$ with $m\ge 1$ and $\ell\ge 0$.  Let
$\bfA=(\ftype_1,\dots,\ftype_m)$ be a collection of toral formal types at the $a_i$'s,
and let $\bfO=(\orb_1,\dots,\orb_\ell)$ be a collection of adjoint
orbits at the $b_j$'s.  Assume that all of the orbits $\orb_j$ are \emph{nonresonant},
meaning that no two eigenvalues of an orbit differ by a nonzero
integer.   One can now consider the category $\sC(\bfA,\bfO)$ of meromorphic connections $\gc$ satisfying the following properties:
\begin{enumerate}
\item $\gc$ has irregular singularities at the $a_i$'s, regular
  singularities at the $b_j$'s, and no other singular points;
\item for each $i$, $\gc$ is framable at $a_i$ with respect to the formal type $\ftype_i$; and
\item  for each $j$, $\gc$ has residue at $b_j$ in $\orb_j$.
\end{enumerate}

 In \cite{BrSa1}, Bremer and Sage constructed the moduli
  space $\M(\bfA,\bfO)$ of this category as a Hamiltonian reduction of
  a product over the singular points of certain symplectic manifolds, each of
  which is endowed with a Hamiltonian action of $\GL_n(\C)$.  At a
  regular singular point with adjoint orbit $\orb$, the manifold is
  just $\orb$, viewed as the coadjoint orbit $\orb\dzz$.  To define the symplectic
  manifold $\M_\ftype$ associated to an $\Stor{b}$-toral formal type $\ftype$, we first
  remark that the parahoric subgroup $\Para^b$ is the pullback of a
  certain standard parabolic subgroup $Q^b\subset\GL_n(\C)$ under the
  map $\GL_n(\pow)\to\GL_n(\C)$ induced by $z\mapsto 0$.  For example, $\Para^1=\GL_n(\pow)$ is the pullback of $Q^1=\GL_n(\C)$, and $\Para^n=\Iwa$ is the pullback of $Q^n=\B$.  The
  ``extended orbit'' $\M_\ftype\subset
  (\Para^b\backslash\GL_n(\C))\times\fgl_n(\pow)^\vee$ is defined by
\begin{equation*}\M_{\ftype}=\{(Q^b g,\alpha) \mid
  (\Ad^*(g)\alpha)|_{\fpara^b}\in \Ad^*(\Para^b)(\ftype)\}.
\end{equation*}
The group $\GL_n(\C)$ acts on $\M_{\ftype}$ via
$h\cdot(Q^b g,\alpha)=(Q^b gh^{-1},\Ad^*(h)\alpha)$, with moment map
$(Q^b g, \alpha)\mapsto\alpha|_{\fgl_n(\C)}$.

\begin{thm}[{\cite[Theorem 5.26]{BrSa1}}]\label{genmodspace} The
  moduli space $\M(\bfA,\bfO)$ is given
  by 
\begin{equation*}  \M(\bfA,\bfO)\cong \left[ \left( \prod_i
      \M_{\ftype_i}\right) \times \left( \prod_j \orb_j \right)
  \right] \sslash_0 \GL_n(\C).
\end{equation*}
\end{thm}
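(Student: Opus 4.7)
The plan is to construct $\M(\bfA,\bfO)$ via Hamiltonian reduction in the usual way, by parameterizing meromorphic connections through their local principal parts together with local framing data, and then quotienting by both the residue constraint and the global change-of-trivialization action.

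First, I would fix a reference trivialization $\phi^\prime$ of the trivializable bundle $V$. Any global connection $\gc$ is then encoded by its matrix $[\gc]_{\phi^\prime}\in\Omega^1(\pp,\fgl_n)$, and the residue theorem forces $\sum_i\Res_{a_i}[\gc]_{\phi^\prime}+\sum_j\Res_{b_j}[\gc]_{\phi^\prime}=0$ in $\fgl_n(\C)$. At each toral singular point $a_i$, I would package the local datum of $\gc$ as the pair $(Q^{b_i}g_i,\alpha_i)$, where $g_i\in\GL_n(\C)$ is a compatible framing (so that after switching to $\phi=g_i\phi^\prime$ the matrix $[\gc]_\phi$ lies in $(\fpara^{b_i})^{-r_i}$ and matches $\ftype_i$ modulo $(\fpara^{b_i})^{1-r_i}$), and $\alpha_i\in\fgl_n(\pow)^\vee$ records the principal part of $[\gc]_{\phi^\prime}$ at $a_i$ via the identification \eqref{duals}. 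Theorem~\ref{T:diag} guarantees that framability is equivalent to the existence of $p\in(\Para^{b_i})^1$ with $\Ad^*(p)(\Ad^*(g_i)\alpha_i)|_{\fpara^{b_i}}=\ftype_i$, which is exactly the defining condition of $\M_{\ftype_i}$. At each regular singular point $b_j$, the corresponding local datum reduces (in the $b=1$ case, where $\Para^1=\GL_n(\pow)$ and $Q^1=\GL_n(\C)$) to the coadjoint orbit $\orb_j\dzz$.

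Next, I would observe that $\prod_i\M_{\ftype_i}\times\prod_j\orb_j$ carries a diagonal Hamiltonian $\GL_n(\C)$-action obtained from the factorwise actions described in the statement, with moment map
\begin{equation*}
\mu\bigl((Q^{b_i}g_i,\alpha_i)_i,(\beta_j)_j\bigr)=\sum_i(\alpha_i)|_{\fgl_n(\C)}+\sum_j(\beta_j)|_{\fgl_n(\C)}.
\end{equation*}
Under the identification of principal parts with functionals via $Y\mapsto\Res(\Tr(Y\,\cdot))$, restriction to the constant subalgebra $\fgl_n(\C)\subset\fgl_n(\pow)$ extracts the residue. Thus the zero-level set $\mu^{-1}(0)$ is cut out precisely by the global residue theorem, i.e., it consists of those tuples of local data that actually assemble into a genuine connection $\gc$ on $\pp$. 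The diagonal $\GL_n(\C)$-action corresponds to changing the reference trivialization $\phi^\prime\mapsto h\cdot\phi^\prime$, so the symplectic quotient $\mu^{-1}(0)/\GL_n(\C)=\bigl[\prod_i\M_{\ftype_i}\times\prod_j\orb_j\bigr]\sslash_0\GL_n(\C)$ parameterizes isomorphism classes of connections in $\sC(\bfA,\bfO)$, proving the asserted isomorphism.

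The main obstacle is the first step: justifying that the extended orbit $\M_{\ftype_i}$ really parameterizes pairs (principal part of $\gc$ at $a_i$, compatible framing $g_i$) modulo local gauge changes in $\Para^{b_i}$, and that this identification is compatible with the symplectic structures. The subtle point is that the framing $g_i$ is only defined up to the stabilizer $Q^{b_i}$ of the filtration mod $z$ inside $\GL_n(\C)$, which is why the first factor of $\M_{\ftype_i}$ is the partial flag variety $Q^{b_i}\backslash\GL_n(\C)$ rather than $\GL_n(\C)$ itself; one must verify that different choices of compatible $g_i$ (differing by elements of $Q^{b_i}$) together with the corresponding $p\in(\Para^{b_i})^1$ of Theorem~\ref{T:diag} yield the same point of $\M_{\ftype_i}$, and conversely that $\M_{\ftype_i}$ surjects onto the set of local framability data. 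Beyond this local identification, the residue-theorem-as-moment-map computation and the quotient by global change of trivialization are formal, paralleling the unramified construction of Boalch~\cite{Boa} and Hiroe--Yamakawa~\cite{HirYam}.
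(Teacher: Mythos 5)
The theorem is imported directly from Bremer--Sage~\cite[Theorem 5.26]{BrSa1}, and the paper gives no proof of it, so there is nothing in the paper itself to compare against. Your sketch nevertheless reproduces the essential strategy of the cited reference: package the local data at each toral singularity as a framing coset $Q^{b_i}g_i$ together with the principal-part functional $\alpha_i$, use Theorem~\ref{T:diag} to identify framability with membership in the extended orbit $\M_{\ftype_i}$, observe that the diagonal $\GL_n(\C)$-moment map $\alpha\mapsto\alpha|_{\fgl_n(\C)}$ extracts the residue under the trace pairing, recognize the zero-level set as the residue-theorem constraint, and quotient by the global change of trivialization. You also correctly flag the step that carries the real technical weight --- verifying that $\M_{\ftype_i}$ is a well-defined parameterization of (principal part, compatible framing) pairs modulo the $Q^{b_i}$-ambiguity in the framing and the $(\Para^{b_i})^1$-gauge absorbed into the defining condition, and that this identification is a symplectomorphism onto the relevant local piece. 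That verification is precisely the content supplied in \cite{BrSa1}, and it is what you would need to write out to turn this outline into a complete proof.
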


Let $\M_{\mathrm{irr}}(\bfA,\bfO)$ be the subset of $\M(\bfA,\bfO)$
  consisting of irreducible connections.  One can now restate the
  toral Deligne--Simpson problem as
  \begin{quote}
\emph{Given the toral formal types $\bfA$ and the nonresonant adjoint
  orbits $\bfO$, determine whether $\M_{\mathrm{irr}}(\bfA,\bfO)$ is nonempty.}
\end{quote}

Note that a $\cC$-toral connection is irreducible, so if any
$\ftype_i$ is $\cC$-toral, then the Deligne--Simpson problem reduces
to the question of whether $\M(\bfA,\bfO)$ is nonempty.

\subsection{Coxeter connections}
We now specialize to an important special case: connections on $\Gm$
with a maximally ramified singular point at $0$ and (possibly) a
regular singularity at $\infty$.  Since maximally ramified formal
connections are Coxeter toral, we will follow \cite{KS2} and refer to
such connections as \emph{Coxeter connections}.

It is possible to give a simpler expression for moduli spaces of
Coxeter connections.

\begin{prop}
Let $\ftype$ be a $\cC$-formal type, and let $\orb$ be a nonresonant
adjoint orbit.  Then \begin{equation*} \M(\ftype,\orb) \cong
  \{(\alpha,Y) \mid \alpha\in\fgl_n(\C[z^{-1}])\tdzz, Y\in\orb,
  \alpha|_{\iwa}\in\Ad^*(\Iwa)(\ftype),\text{ and
  }\Res(\alpha)+Y=0\}/\B .
\end{equation*}
\end{prop}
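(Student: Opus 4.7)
The plan is to apply Theorem~\ref{genmodspace} with $m=1$ and $\ell=1$, and then simplify the resulting Hamiltonian reduction by trivializing the coset factor via the residual $\GL_n(\C)$-action. Since $\ftype$ is a $\cC$-formal type, the standard parahoric is $\Para^n = \Iwa$ with $Q^n = \B$, so
\[
  \M_{\ftype}=\{(\B g,\alpha)\in(\B\backslash\GL_n(\C))\times\fgl_n(\pow)^\vee\mid (\Ad^*(g)\alpha)|_{\iwa}\in\Ad^*(\Iwa)(\ftype)\},
\]
and the theorem gives $\M(\ftype,\orb)\cong[\M_{\ftype}\times\orb]\sslash_0\GL_n(\C)$, with moment map $(\B g,\alpha)\mapsto\alpha|_{\fgl_n(\C)}$ on the first factor and $Y\mapsto Y$ on the coadjoint orbit $\orb\tdzz$.

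First I would use the $\GL_n(\C)$-action to eliminate the coset variable. Acting by $h=g$ on a point $((\B g,\alpha),Y)$ produces $((\B,\Ad^*(g)\alpha),\Ad^*(g)Y)$, so after relabeling $\Ad^*(g)\alpha$ as $\alpha$ and $\Ad^*(g)Y$ as $Y$ the constraint becomes the cleaner $\alpha|_{\iwa}\in\Ad^*(\Iwa)(\ftype)$. The residual symmetry — the stabilizer in $\GL_n(\C)$ of $\B\in\B\backslash\GL_n(\C)$ under the right action $g\mapsto gh^{-1}$ — is precisely $\B$, which will be the group we quotient by at the end.

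Second, I would interpret the moment map condition in residue-theoretic terms. Using the identification $\fgl_n(\pow)^\vee\cong\fgl_n(\C[z^{-1}])\tdzz$ from \eqref{duals}, we view $\alpha$ as a $1$-form in $\fgl_n(\C[z^{-1}])\tdzz$; the pairing $Y\mapsto\Res(\Tr(Y\alpha))$ identifies $\alpha|_{\fgl_n(\C)}$ with $\Res(\alpha)$. Since the moment map on $\orb$ sends $Y$ to itself, the level-zero condition for the reduction reads $\Res(\alpha)+Y=0$. Combining this with the previous step yields exactly the description of $\M(\ftype,\orb)$ in the proposition.

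The main obstacle will be bookkeeping: carefully identifying the moment maps with residues under the trace pairing, and checking that the resulting sign in $\Res(\alpha)+Y=0$ is the one dictated by the residue theorem (with the sign convention at $\infty$ built into the definition of framability there). Once these identifications are in place, the statement is a direct specialization of Theorem~\ref{genmodspace}.
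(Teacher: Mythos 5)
Your proposal is correct and is essentially the paper's proof: both specialize Theorem~\ref{genmodspace} to the case $\Para^b=\Iwa$, $Q^b=\B$, identify the moment map with the residue via the trace pairing and \eqref{duals}, and use the residual $\GL_n(\C)$-action to normalize the coset variable to $\B$, leaving a quotient by the stabilizer $\B$. The only difference is the order in which you impose the level-zero condition versus normalize the coset (you normalize first, the paper imposes the moment-map constraint first), which is immaterial.
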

\begin{proof} Applying Theorem~\ref{genmodspace}, we have
  \[\begin{array}{rll}
      \M(\ftype,\orb)
      &\cong(\M_{\ftype}\times\orb) \sslash_0 \GL_n(\C)\\
      &\cong\{(\B g,\alpha,Y) \mid (\B g,\alpha)\in\M_{\ftype},Y\in\orb,\text{ and }\Res(\alpha)+Y=0\}/\GL_n(\C)\\
      &\cong\{(\B,\alpha,Y) \mid \alpha\in\fgl_n(\C[z^{-1}])\dzz,
        Y\in\orb, \alpha|_{\iwa}\in\Ad^*(\Iwa)(\ftype),\text{ and
        }\Res(\alpha)+Y=0\}/\B.\end{array}\]
  \end{proof}

  We can now state the solution to the Deligne--Simpson problem for
  Coxeter connections.  For a given $\cC$-formal type and a monic
  polynomial $q$ of degree $n$, let \[\DS(\ftype,
    q)=\{\orb\in\orbp{q}\mid\M(\ftype, \orb) \neq \varnothing \}.\]

  \begin{thm}\label{thm:mainthm}
  Let $r$ and $n$ be positive integers with $\gcd(r,n)=1$, let $\ftype$ be a maximally ramified
  formal type of slope $r/n$, and let $q=\prod_{i=1}^s(x-a_i)^{m_i}\in\C[x]$ with $a_1,\ldots,a_s\in\C$ distinct modulo $\Z$.  Then
  \[
    \DS(\ftype, q)=
    \begin{cases}
      \prinfilt{\prp{r}{q}} & \text{if }\Res(\Tr(\ftype))=-\sum_{i=1}^sm_ia_i, \\
      \varnothing & \text{else}.
    \end{cases}
  \]

\end{thm}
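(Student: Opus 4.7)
The plan is to work with the explicit moduli space description in the proposition preceding the theorem and reduce non-emptiness to matrix-theoretic conditions controlled by Theorem~\ref{thm:r-regular}. First, the trace condition is necessary by the residue theorem: writing $\alpha = \sum_{j \ge 0} N_j z^{-j}\dzz$ one has $\Res(\alpha) = N_0 = -Y$, and the coadjoint Iwahori action preserves traces, so $\Tr(N_0) = \Res(\Tr(\ftype))$. A direct calculation using $\Tr(\om^{-k}) = 0$ for $0 < k < n$ gives $\Res(\Tr(\ftype)) = n\cdot p(0)$. Combined with $\Tr(Y) = \sum m_i a_i$, this yields the stated condition, whose failure forces $\M(\ftype,\orb)$ to be empty for every $\orb$. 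Henceforth assume it.

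The key algebraic observation is that $\iwa^\perp \cap \fgl_n(\C[z^{-1}])\dzz = \mathfrak{n}^+\dzz$ (strictly-upper-triangular constants), so the coadjoint orbit constraint $\alpha|_\iwa \in \Ad^*(\Iwa)(\ftype)$ unpacks to
\[
-Y = [z^0]\Ad(g)(p(\om^{-1})) + U \qquad \text{for some } g \in \Iwa, \ U \in \mathfrak{n}^+.
\]
For $r < n$, $p(\om^{-1})$ has only $z^0$ and $z^{-1}$ components in its Laurent expansion, and a direct calculation gives
\[
[z^0]\Ad(g)(p(\om^{-1})) = \Ad(g_0)\bigl(p_0 + [g_0^{-1}g_1,\, p_{-1}]\bigr),
\]
where $g_0 \in \B$ is the constant term of $g$, and $p_0, p_{-1}$ are the $z^0$ and $z^{-1}$ components of $p(\om^{-1})$. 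A commutator analysis shows that the contribution of $[g_0^{-1}g_1, p_{-1}]$ lies on the diagonal and subdiagonals of depth strictly less than $r$, while $\Ad(g_0)$ preserves subdiagonal depth modulo $\mathfrak{n}^+$. Consequently, modulo $\mathfrak{n}^+$, the matrix $-Y$ has support on the diagonal and the first $r$ subdiagonals, with the $r$-th subdiagonal nonzero (a $B$-rescaling of $a_r$). This structure places $-Y$ in the transpose of $V^r$; since transposition preserves $\GL_n(\C)$-orbits, the orbit of $Y$ meets $V^r$, and by Theorem~\ref{thm:r-regular} we conclude $\orb \succeq \prp{r}{q}$, establishing necessity.

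For sufficiency, given $\orb \succeq \prp{r}{q}$, Theorem~\ref{thm:r-regular} supplies $Y' \in \orb \cap V^r$; set $Y = (Y')^T \in \orb$, so that $-Y$ has zeros on subdiagonals of depth $>r$ and nonzero entries on the $r$-th subdiagonal. Writing $-Y = L + U$ with $L$ its lower-triangular-plus-diagonal part and $U \in \mathfrak{n}^+$ its strictly-upper part, the task is to find $g \in \Iwa$ with the lower-triangular-plus-diagonal part of $\Ad(g_0)(p_0 + [g_0^{-1}g_1, p_{-1}])$ equal to $L$. One chooses $g_0$'s torus part so that $\Ad(g_0)(p_0)$ matches $L$ on the $r$-th subdiagonal (possible since this subdiagonal of $L$ is nowhere vanishing), then solves for $g_1$ so that the commutator $[g_0^{-1}g_1, p_{-1}]$ fills in the remaining subdiagonals of depth $k < r$ and adjusts the diagonal. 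Setting $\alpha$ to be the $z^{\le 0}$-truncation of $\Ad(g)(p(\om^{-1}))\dzz + U\dzz$ produces the desired element of $\M(\ftype, \orb)$.

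The principal technical hurdle is the commutator surjectivity needed for sufficiency: for any prescribed pattern on subdiagonals $1, \ldots, r-1$, there must exist $g_1 \in \fgl_n(\C)$ with $[g_0^{-1}g_1, p_{-1}]$ realizing it modulo $\mathfrak{n}^+$. This reduces to an explicit computation with the superdiagonal generators constituting $p_{-1}$, organized inductively by subdiagonal depth. A secondary issue is the case $r \ge n$, where $p(\om^{-1})$ has higher-order components and additional $g_k$'s enter the formula; in this regime, however, $\prinfilt{\prp{r}{q}} = \orbp{q}$ is the entire orbit poset, so the orbit condition becomes vacuous and the enlarged parameter space easily accommodates any characteristic polynomial with the correct trace.
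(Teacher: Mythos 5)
The necessity direction of your argument is essentially correct and parallels the paper's Lemma~\ref{lemmaatmostr}, though you arrive at it computationally (expanding $\Ad(g)(p(\om^{-1}))$ in powers of $z$ and isolating the $z^0$ coefficient), whereas the paper simply reads off the support of $\Res(X'\dzz)$ for $X'\in\iwa^{-r+1}$ directly from the Iwahori grading $\iwa(k)=\om^k\ft$. Both reduce to $\Res(\alpha)$ lying in (a version of) $V^r$ and invoke Theorem~\ref{thm:r-regular}; the detour through transposition in your write-up is an artifact of taking the displayed formula defining $V^r$ at face value, where the prose and the surrounding applications indicate the opposite convention.

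The sufficiency direction, however, has a real gap. You reduce it to a ``commutator surjectivity'' claim --- that after choosing the torus part of $g_0$ to fix the $r$th subdiagonal, one can choose $g_1$ so that the lower-triangular-plus-diagonal part of $\Ad(g_0)\bigl(p_0+[g_0^{-1}g_1,p_{-1}]\bigr)$ equals an arbitrary prescribed $L$ --- and you flag this as the technical hurdle but do not prove it, and in fact the recipe as stated cannot work. Already for $r=1$ one has $p_{-1}=b_1 e_{1n}$, and for any constant matrix $X$ the lower-triangular-plus-diagonal part of $[X,e_{1n}]$ is $X_{n1}(e_{nn}-e_{11})$: a single free parameter. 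The diagonal of $L$ carries $n-1$ free parameters (after the trace constraint), so $g_1$ alone falls far short; the unipotent part of $g_0$, and for larger $r$ the higher coefficients $g_2,g_3,\ldots$, must also be used, but your scheme never engages them. The paper sidesteps the whole issue by a two-step device which your proposal does not reproduce: Proposition~\ref{functionalorbit} and Corollary~\ref{residueresult} proceed inductively, one level of the Iwahori filtration at a time, conjugating by elements $1+\gamma\om^{r-s}$ for $\gamma\in\ft$ and invoking the surjectivity Lemma~\ref{adomegamap}, to produce a residue equal to $\Res(a\om^{-r}\dzz)+D$ for an arbitrary diagonal $D$ of the correct trace; combined with Proposition~\ref{Jordanform} this realizes exactly the minimal orbit $\prp{r}{q}$. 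Then, to reach any $\orb\succ\prp{r}{q}$, the paper applies Krupnik's theorem directly in the moduli space: there is a strictly upper triangular $N$ with $Y+N\in\orb$, and since $(N\dzz)|_\iwa=0$ one may replace $\alpha$ by $\alpha-N\dzz$ without disturbing the framing. You do invoke Theorem~\ref{thm:r-regular} (which uses Krupnik internally), but then try to hit the specific representative $Y'$ it supplies, a much harder target than the single minimal orbit. Finally, the case $r\ge n$ cannot be dismissed by noting the filter is all of $\orbp{q}$; one still has to exhibit non-emptiness, and ``easily accommodates'' is not an argument --- the paper's inductive construction covers this range uniformly, since $\Res(a\om^{-r}\dzz)=0$ for $r>n$ so Corollary~\ref{residueresult} yields any diagonal residue of the right trace and Krupnik finishes.
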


We prove this theorem in the next subsection.

\begin{rmk}  If we write $\ftype=p(\om^{-1})\dzz$, then $\M(\ftype,\orb)$ is
nonempty if and only if $n p(0)=-\Tr(\orb)$ and if $\orb$ has at most
$r$ Jordan blocks for each eigenvalue.  This second condition is
always satisfied if $r>n$.  Note that the solution only depends on the slope and the residue of
the formal type.
\end{rmk}

This theorem immediately gives the corresponding result for
$\SL_n$-connections.  (In terms of a global trivialization, one may
view an $\SL_n$-connection on $\pp$ as an operator $d+X\dzz$ with
$X\in\fsl_n(\C[z,z^{-1}])$.)  In this case, maximally ramified formal
types are of the form $p(\om^{-1})\dzz$ with $p(0)=0$ and
$\Tr(\orb)=0$, so the trace condition becomes vacuous.  Accordingly,
we obtain the following solution to the Deligne--Simpson for Coxeter
$\SL_n$-connections.
\begin{corollary}  The moduli space of Coxeter $\SL_n$-connections with formal type of slope $r/n$ and adjoint orbit $\orb$ is
  nonempty if and only if $\orb\succeq
\prp{r}{\ch(\orb)}$.
\end{corollary}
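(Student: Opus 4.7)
The plan is to derive this corollary as an immediate specialization of Theorem~\ref{thm:mainthm}: the only substantive check is that the trace condition from the main theorem becomes vacuous under the $\fsl_n$-tracelessness constraints.

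First I would compute $\Res(\Tr(\ftype))$ for a maximally ramified formal type $\ftype = p(\om^{-1})\dzz$. The relation $\om^n = zI$ implies that $\Tr(\om^{-k}) = n z^{-k/n}$ when $n \mid k$, while $\Tr(\om^{-k}) = 0$ otherwise, since the nonzero entries of $\om^{-k}$ then lie strictly off the main diagonal. Writing $p(x) = \sum_{k=0}^r b_k x^k$, I would therefore obtain $\Tr(p(\om^{-1})) = n b_0 + n \sum_{q \geq 1,\, qn \leq r} b_{qn}\, z^{-q}$, and extracting the $z^{-1}\,dz$-coefficient of $\Tr(\ftype)$ yields $\Res(\Tr(\ftype)) = n\, p(0)$.

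Next I would invoke the $\fsl_n$-tracelessness of both the formal and residue data: for an $\SL_n$-connection the formal type is $\fsl_n$-valued, so $\Tr(\ftype) \equiv 0$, which forces $p(0) = 0$ and hence $\Res(\Tr(\ftype)) = 0$; simultaneously, the residue at infinity lies in $\fsl_n$, so $\Tr(\orb) = \sum_i m_i a_i = 0$. Consequently the trace condition $\Res(\Tr(\ftype)) = -\sum_i m_i a_i$ of Theorem~\ref{thm:mainthm} reduces to $0 = 0$ and is automatically satisfied.

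Finally I would apply Theorem~\ref{thm:mainthm} directly: it yields $\DS(\ftype, q) = \prinfilt{\prp{r}{q}}$ with $q = \ch(\orb)$, which is exactly the stated criterion $\orb \succeq \prp{r}{\ch(\orb)}$. The only remaining loose end — which I do not view as a serious obstacle — is to confirm that the $\SL_n$ moduli space agrees with the subspace of the $\GL_n$ moduli space cut out by the tracelessness conditions. This follows from the construction of $\M(\ftype, \orb)$ in \S\ref{meromorphicconnections}, since the Iwahori coadjoint action and the $B$-quotient both preserve tracelessness, so Theorem~\ref{thm:mainthm} applies verbatim.
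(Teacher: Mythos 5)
Your proposal is correct and follows essentially the same route as the paper: specialize Theorem~\ref{thm:mainthm} and observe that the trace condition becomes vacuous because $p(0)=0$ and $\Tr(\orb)=0$ in the $\SL_n$ setting. The paper treats this as immediate (noting $\Res(\Tr(\ftype))=n\,p(0)$ in a remark following the theorem), while you spell out the $\Tr(\om^{-k})$ computation explicitly; the substance is the same.
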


The notion of a Coxeter $\G$-connection makes sense for any reductive
group with connected Dynkin diagram, and there is an analogue of the
Deligne--Simpson problem in this context.  We give a specific conjecture about
this problem (under the additional hypothesis of unipotent monodromy)
in the introduction.

\subsection{Proof of Theorem~\ref{thm:mainthm}}

We begin with some preliminaries on $\Iwa^1$-orbits in $\iwa^\vee$.
\begin{lem}\label{adomegamap}
For any $r,s\in\Z$ with $r$ relatively prime to $n$, the linear map
$\phi_{r,s}:\ft\to\fgl_n(\C)$ given by $\gamma\mapsto
[\gamma\om^{r-s},\om^{-r}]\om^s$ has image
$\ft\cap\fsl_n(\C)$.
\end{lem}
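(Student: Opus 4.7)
The plan is to explicitly compute $\phi_{r,s}(\gamma)$ and reduce the claim to a fact about cyclic permutations.

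First I would expand the commutator and simplify. Using $\om^a\om^b=\om^{a+b}$, we get
\[
[\gamma\om^{r-s},\om^{-r}]\om^s=\gamma\om^{r-s}\om^{-r}\om^s-\om^{-r}\gamma\om^{r-s}\om^s=\gamma-\om^{-r}\gamma\om^{r}.
\]
In particular, the parameter $s$ drops out, so it suffices to prove the claim for $\phi_r(\gamma)\coloneqq\gamma-\om^{-r}\gamma\om^{r}$.

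Next I would identify the action of conjugation by $\om$ on $\ft$. A direct check (using $\om=\sum_{i=1}^{n-1}e_{i,i+1}+ze_{n,1}$ and $\om^n=z I$) shows that conjugation by $\om^{-1}$ permutes the diagonal entries by the cyclic permutation $\sigma=(1\,2\,\cdots\,n)$; that is, if $\gamma=\diag(a_1,\dots,a_n)$, then $\om^{-1}\gamma\om=\diag(a_2,a_3,\dots,a_n,a_1)$. Therefore $\om^{-r}\gamma\om^{r}=\sigma^{r}(\gamma)$, and $\phi_r$ coincides with the operator $(I-\sigma^{r})$ on $\ft\cong\C^n$.

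The final step is a linear-algebra calculation on $\C^n$. Because $\gcd(r,n)=1$, $\sigma^{r}$ is again an $n$-cycle, so its fixed subspace is the $1$-dimensional line of constant vectors and $\ker(I-\sigma^{r})$ has dimension $1$; hence $\dim\im(I-\sigma^{r})=n-1$. On the other hand, for any $w\in\ft$ one has $\Tr(w-\sigma^{r}(w))=0$ since $\sigma^{r}$ merely permutes diagonal entries, so $\im(\phi_r)\subseteq\ft\cap\fsl_n(\C)$. Since $\dim(\ft\cap\fsl_n(\C))=n-1$ as well, the inclusion is an equality, proving the lemma. No step looks like a serious obstacle; the main content is the observation that the commutator collapses to $\gamma-\om^{-r}\gamma\om^r$, after which a standard cycle-structure argument finishes the proof.
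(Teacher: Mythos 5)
Your proof is correct and follows essentially the same route as the paper's: both collapse the commutator to $\gamma-\om^{-r}\gamma\om^{r}$, observe that conjugation by $\om^{-r}$ cyclically permutes the diagonal entries (the paper phrases this as $\om^{-r}\in N(\T)$ representing a power of a Coxeter element, and computes $\ker\phi_{r,s}=\C(\!(\om)\!)\cap\ft=\C\Id$ via centralizers rather than via fixed points of an $n$-cycle, but this is the same fact), and then finish by a dimension count against $\ft\cap\fsl_n(\C)$. A minor remark: with the paper's convention $\om=\sum e_{i,i+1}+ze_{n,1}$ one actually gets $\om^{-1}\gamma\om=\diag(a_n,a_1,\dots,a_{n-1})$ rather than $\diag(a_2,\dots,a_n,a_1)$, but since this is still an $n$-cycle the argument is unaffected.
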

\begin{proof} Since
  $[\gamma\om^{r-s},\om^{-r}]=\gamma\om^{-s}-\om^{-r}\gamma\om^{r-s} =
  (\gamma-\om^{-r}\gamma\om^{r})\om^{-s}$, we have
  $\phi_{r,s}(\gamma)=\gamma-\om^{-r}\gamma\om^r.$ Note that
  $\om^{-r}\in N(\T)$ --- indeed, it is a monomial matrix which
  represents the $-r$th power of a Coxeter element in the Weyl group
  $\symmetric{n}$ --- so the entries in $-\om^{-r}\gamma\om^r\in\ft$
  are a reordering of the entries in $\gamma\in\ft$.  It follows that
  $\phi_{r,s}(\ft)\subset \ft\cap\fsl_n(\C)$.  To show equality, it
  suffices to check that $\dim \ker \phi_{r,s} =1$.  We have
  $\gamma \in \ker\phi_{r,s}$ if and only if $\gamma$ commutes with
  $\om^{-r}$.  Since $\gcd(r,n)=1$, the centralizers of $\om^{-r}$ and
  $\om$ coincide and equal $\C(\!(\om)\!)$.  Thus,
  $\ker(\phi_{r,s})=\C(\!(\om)\!)\cap \ft=\C \Id$.
\end{proof}

We can now give a convenient representative of certain coadjoint $\Iwa^1$-orbits in
$\iwa^\vee$.

\begin{prop}\label{functionalorbit}
  Let $r$ and $n$ be positive integers with $\gcd(r,n)=1$.  Suppose $\alpha\in\iwa^\vee$ is
  given by $\alpha=(a\om^{-r}+X)\dzz$ for some $a\in\C^*$ and
  $X\in\iwa^{-r+1}$.  Then there exists $g\in\Iwa^1$ such that
  \begin{equation}\label{normalform}\Ad^*(g)\alpha=(a
  \om^{-r}+\sum_{i=0}^{r-1}c_i e_{ii}\om^{-i})\tdzz.
\end{equation}
\end{prop}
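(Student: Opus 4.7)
The plan is to normalize $\alpha$ one graded piece at a time by conjugating with a product of carefully chosen elements of the pro-unipotent group $\Iwa^1$. First I would use the Iwahori grading $\iwa(j)=\ft\om^j$ from \eqref{iwahorigrading} and the identification $\iwa^\vee\cong\ft[\om^{-1}]\dzz$ from \eqref{duals} to write
\[
\alpha\,=\,\Bigl(a\om^{-r}+\sum_{j=-r+1}^{0}X_j\om^j\Bigr)\dzz,\qquad X_j\in\ft;
\]
components of $X$ of positive $\om$-degree pair trivially with $\iwa$ and may be discarded. The goal is to reduce each $X_j$ to a rank-one diagonal matrix.

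The central observation I would establish is that, for $\gamma\in\ft$ and $k\geq 1$, the identity $\om^{-r}\gamma=\sigma(\gamma)\om^{-r}$, where $\sigma(\gamma):=\om^{-r}\gamma\om^r\in\ft$ (since $\om^{-r}$ represents a power of a Coxeter element of $\symmetric{n}$), yields
\[
[\gamma\om^k,\,a\om^{-r}]=a\bigl(\gamma-\sigma(\gamma)\bigr)\om^{k-r}.
\]
The first-order coadjoint effect of conjugating $\alpha=A\dzz$ by $\exp(\gamma\om^k)\in\Iwa^1$ is to add $[\gamma\om^k,A]\dzz$, whose contribution at degree $k-r$ is exactly the expression above. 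By Lemma~\ref{adomegamap}, which crucially uses $\gcd(r,n)=1$, the map $\gamma\mapsto\gamma-\sigma(\gamma)$ surjects onto $\ft\cap\fsl_n(\C)$, so the $\om^{k-r}$ component of $\alpha$ can be shifted by any traceless diagonal.

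Then I would set up the induction: for $k=1,2,\dots,r$, apply $g_k=\exp(\gamma_k\om^k)\in\Iwa^1$ with $\gamma_k$ chosen so that the first-order correction sends the current $\om^{k-r}$ component to $c_{r-k}e_{r-k,r-k}$; the scalar $c_{r-k}$ is forced to equal the trace of that component, since a traceless shift preserves trace. The key inductive invariant I would verify is that step $k$ leaves all components at degrees $<k-r$ undisturbed: the higher-order exponential terms live in $\iwa(mk-r)$ for $m\geq 2$, while the brackets $[\gamma_k\om^k,X_j\om^j]$ lie in $\iwa(k+j)$ with $j>-r$, and both of these lie strictly above degree $k-r$. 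Hence no step undoes an earlier normalization, and $g=g_r\cdots g_1\in\Iwa^1$ delivers the form \eqref{normalform}.

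The hard part will be the bookkeeping: verifying cleanly that every correction produced at step $k$ lands at degree $\geq k-r+1$, so that the induction actually closes, and that Lemma~\ref{adomegamap} furnishes exactly enough freedom (no more, no less) to collapse each $X_{k-r}$ to a single-entry diagonal with the trace as its sole orbit invariant. Termination is automatic, since $\iwa^\vee$ only registers the $r$ degrees $\{-r+1,\dots,0\}$.
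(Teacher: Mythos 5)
Your proposal is correct and follows essentially the same approach as the paper's proof. The paper conjugates by $1+\gamma\om^{r-s}$ rather than $\exp(\gamma\om^k)$, but the first-order coadjoint effect is identical, and both proofs run the same induction on graded pieces using Lemma~\ref{adomegamap} to shift each component by an arbitrary traceless diagonal (so the trace is the only invariant), while noting that all higher-degree corrections land strictly above the component being normalized and therefore do not disturb earlier steps.
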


\begin{proof} 
  All elements of $\iwa^{1}\dzz$ represent the zero functional on
  $\iwa$, so it suffices to show that if
  $\alpha\in (a \om^{-r}+\sum_{i=s+1}^{r-1}c_i
  e_{ii}\om^{-i}+\beta\om^{-s}+\iwa^{-s+1})\dzz$ for $1\le s\le r-1$,
  then there exists $\gamma\in\ft$ such that
  $\Ad^*(1+\gamma\om^{r-s})(\alpha)\in (a\om^{-r}
  +\sum_{i=s+1}^{r-1}c_i e_{ii}\om^{-i}+c_s e_{ss}\om^{-s}+\iwa^{-s+1})\dzz$.

Write $\beta=(b_1,\dots,b_n)$.  By Lemma~\ref{adomegamap}, there exists
$\gamma\in\ft$ such that \begin{equation*}\phi_{r,s}(a\gamma)=(-b_1, \ldots, -b_{s-1},\sum_{i\neq s}b_i,
  -b_{s+1},\ldots, -b_n).
\end{equation*}
Setting $c_s=\sum_{i}b_i$ gives
\begin{align*}\Ad^*(1+\gamma\om^{r-s})(\alpha)&\in (a\om^{-r}
                                                   +\sum_{i=s+1}^{r-1}c_i
                                                   e_{ii}\om^{-i}+\beta\om^{-s}+a[\gamma\om^{r-s},\om^{-r}]+\iwa^{-s+1})\tdzz\\
                                                 &= (a\om^{-r}
                                                   +\sum_{i=s+1}^{r-1}c_i
                                                   e_{ii}\om^{-i}+(\beta+\phi_{r,s}(a\gamma))\om^{-s}+\iwa^{-s+1})\tdzz\\
                                                 &= (a\om^{-r}
                                                   +\sum_{i=s+1}^{r-1}c_i
                                                   e_{ii}\om^{-i}+c_{s}e_{ss}\om^{-s}+\iwa^{-s+1})\tdzz.
\end{align*}
\end{proof}

  \begin{corollary}\label{residueresult}  Given $\alpha$ as in the
  proposition and $D\in\ft$ with $\Tr(D)=\Res(\Tr(\alpha))$,   there
  exists $h\in\Iwa^1$ such that
  $\Res(\Ad^*(h)\alpha)=\Res(a\om^{-r}\tdzz)+D$.
\end{corollary}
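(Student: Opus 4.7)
The plan is to apply Proposition~\ref{functionalorbit} to put $\alpha$ in normal form, read off its residue, and then apply one additional coadjoint transformation by an element $h'\in\Iwa^r\subset\Iwa^1$ to modify the diagonal part of the residue so that it matches the prescribed $D$.

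First, choose $g_0\in\Iwa^1$ with $\Ad^*(g_0)\alpha\equiv(a\om^{-r}+\gamma_0+\sum_{i=1}^{r-1}c_i e_{ii}\om^{-i})\tdzz\pmod{\iwa^1\tdzz}$, where $\gamma_0\in\ft$ denotes the degree-zero piece of the normal form. The explicit description of $\om^{-i}$ shows that its $i$-th row has only a single nonzero entry, $z^{-1}$ in column $n$, so $e_{ii}\om^{-i}=z^{-1}e_{i,n}$ and consequently $\Res(e_{ii}\om^{-i}\tdzz)=0$ for each $1\le i\le r-1$. Therefore $\Res(\Ad^*(g_0)\alpha)=\Res(a\om^{-r}\tdzz)+\gamma_0$, and since $\Tr$ is $\GL_n(\pow)$-conjugation-invariant and commutes with $\Res$, we obtain $\Tr(\gamma_0)=\Res(\Tr(\alpha))$.

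Next, by Lemma~\ref{adomegamap} with $s=0$, the map $\phi_{r,0}\colon\ft\to\ft\cap\fsl_n$, $\gamma\mapsto\gamma-\om^{-r}\gamma\om^r$, is surjective. The hypothesis $\Tr(D)=\Res(\Tr(\alpha))=\Tr(\gamma_0)$ makes $D-\gamma_0$ traceless, so we can pick $\gamma\in\ft$ with $a\phi_{r,0}(\gamma)=D-\gamma_0$. Setting $h'=1+\gamma\om^r\in\Iwa^r$, we expand
\[
\Ad^*(h')\Ad^*(g_0)\alpha\equiv\Ad^*(g_0)\alpha+[\gamma\om^r,\,a\om^{-r}]\tdzz\pmod{\iwa^1\tdzz},
\]
because every other contribution lies in $\iwa^1\tdzz$: the commutators $[\gamma\om^r,c_i e_{ii}\om^{-i}]\in\iwa^{r-i}$ for $1\le i\le r-1$, the commutator $[\gamma\om^r,\gamma_0]\in\iwa^r$, and all quadratic-and-higher corrections in $Y=\gamma\om^r$ (which land in $\iwa^{r(k+l-1)}\tdzz$ for $k+l\ge 2$) all sit inside $\iwa^1\tdzz$ and therefore annihilate $\iwa$. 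The surviving leading commutator equals $a\phi_{r,0}(\gamma)\tdzz=(D-\gamma_0)\tdzz$, so $h:=h'g_0\in\Iwa^1$ satisfies $\Res(\Ad^*(h)\alpha)=\Res(a\om^{-r}\tdzz)+D$.

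The main obstacle is purely bookkeeping: one must verify both that each $c_i e_{ii}\om^{-i}$ contributes nothing to the residue, and that every commutator and higher-order term one wishes to discard in the final step truly lands in $\iwa^1\tdzz$. Once those filtration bounds are checked, the conclusion is a single application of Lemma~\ref{adomegamap}.
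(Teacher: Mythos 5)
Your proof is correct and follows essentially the same route as the paper's: apply Proposition~\ref{functionalorbit} to pass to the normal form, observe that for $i\ge 1$ the term $e_{ii}\om^{-i}$ contributes nothing to the residue so that the residue is $\Res(a\om^{-r}\tdzz)$ plus a diagonal matrix, and then conjugate by $1+\gamma\om^r$ and invoke Lemma~\ref{adomegamap} (surjectivity of $\phi_{r,0}$ onto $\ft\cap\fsl_n(\C)$) to shift that diagonal part to any prescribed $D$ of the same trace. Your filtration bookkeeping for the discarded commutators and higher-order terms is a bit more explicit than the paper's, but the argument is the same one.
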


\begin{proof}

 Given $g$ as in \eqref{normalform}, we claim that
\begin{equation*}\Res(\Ad^*(g)\alpha)\in\Res(a\om^{-r}\tdzz)+\ft.
\end{equation*}

To see this, write $s=kn+u$ with $0< u\le n$.  Recall that $N_u$ is
the matrix with $1$'s on the $u$th subdiagonal and $0$'s elsewhere.
Similarly, let $E_u$ be the matrix whose only nonzero entries are
$1$'s on the $(n-u)$th superdiagonal.  (We make the convention that
$N_n=0$ and $E_n=\Id$.)  It is easy to verify that
$\om^{-s}=z^{-k}(N_{u}+z^{-1}E_{u})$.  Since $e_{uu}E_u=e_{un}$, and
$e_{uu}N_u=0$, we have $e_{ss}\om^{-s}=z^{-(k+1)}e_{sn}$.  In
particular, $\Res(e_{ss}\om^{-s}\dzz)=0$ if $s>0$ and equals $e_{nn}$
if $s=0$.  Applying this to \eqref{normalform} gives
$\Res(\Ad^*(g)\alpha)=\Res(a\om^{-r}\dzz)+c_0e_{nn}$ as desired.

To complete the proof, it suffices to show that if
$\Res(\alpha)=\Res(a\om^{-r}\dzz)+D^\prime$ for some
$D^\prime\in\ft$, and if $D\in\ft$ satisfies $\Tr(D)=\Tr(D^\prime)$,
then there exists $\gamma\in\ft$ such that
$\Res(\Ad^*(1+\gamma\om^{r})\alpha)=\Res(a\om^{-r}\dzz)+D$.
Since
$\Res(\Ad^*(1+\gamma\om^{r})\alpha)=\Res(a\om^{-r}\dzz)+D^\prime+a[\gamma\om^{r},\om^{-r}]$,
it suffices to find $\gamma\in\ft$ such that
$D^\prime+a[\gamma\om^{r},\om^{-r}]=D$.  This follows from
Lemma~\ref{adomegamap}, since $a^{-1}(D-D^\prime)\in \ft\cap\fsl_n(\C) =\im(\phi_{r,0})$.
\end{proof}

\begin{lem}\label{lemmaatmostr}
Let $r$ and $n$ be positive integers with $\gcd(r,n)=1$. Suppose that $\ftype\in\ftypes(\cC,r)$
and $\orb$ is a nonresonant adjoint orbit in $\fgl_n(\C)$.  If $\M( \ftype, \orb) \neq \varnothing$, then the Jordan form of $\orb$ has at most $r$ blocks for each eigenvalue.
\end{lem}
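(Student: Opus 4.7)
The moduli space description gives
\[
\M(\ftype, \orb) \cong \{(\alpha, Y) : \alpha|_\iwa \in \Ad^*(\Iwa)(\ftype),\ Y \in \orb,\ \Res(\alpha) + Y = 0\}/\B,
\]
so my plan is to exhibit a matrix in $\orb \cap V^r$, whereupon Theorem~\ref{thm:r-regular} delivers $\orb \in \cF^r$, i.e., at most $r$ Jordan blocks per eigenvalue. The creative step is a transpose trick: I will not place $Y = -\Res(\alpha)$ directly into $V^r$, but rather $Y^T$.

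The setup is a normalization of $\alpha|_\iwa$. Using the decomposition $\Iwa = \B \cdot \Iwa^1$ together with the $\B$-quotient in the moduli space, I replace $(\alpha, Y)$ by a $\B$-equivalent representative for which $\alpha|_\iwa$ lies in the $\Iwa^1$-coadjoint orbit of the normal form $\ftype_0 = (a\om^{-r} + \sum_{i=0}^{r-1} c_i e_{ii}\om^{-i})\dzz$ supplied by Proposition~\ref{functionalorbit}. Because $\Ad(u^{-1})Y \equiv Y \pmod{\iwa^{r+1}}$ for $u \in \Iwa^1$ and $Y \in \iwa^r$, the functionals $\alpha|_\iwa$ and $\ftype_0$ agree on $\iwa^r$. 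Translating through the pairing $\langle X, \nu\rangle = \Res(\Tr(X\nu))$, this says that $\alpha$ and the tautological lift $\tilde\ftype_0 = (a\om^{-r} + \sum c_i e_{ii}\om^{-i})\dzz \in \fgl_n(\C[z^{-1}])\dzz$ differ by an element of $\iwa^{-r+1}\dzz \cap \fgl_n(\C[z^{-1}])\dzz$.

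The content of the computation is an entry-by-entry description of how such a perturbation can modify $\Res(\tilde\ftype_0) = aN_r + c_0 e_{nn}$ (this formula is exactly what appears in the proof of Corollary~\ref{residueresult}). A direct inspection of the graded pieces $\iwa(i)$ for $i \ge -r+1$ shows that the constant-in-$z$ part of $\iwa^{-r+1} \cap \fgl_n(\C[z^{-1}])$ is supported on super-diagonals, the main diagonal, and the first $r-1$ sub-diagonals — equivalently, it vanishes at every position $(i,j)$ with $i - j \ge r$. Consequently $\Res(\alpha)$ agrees with $aN_r + c_0 e_{nn}$ on the $r$-th subdiagonal and everything below. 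Writing this out, $Y = -\Res(\alpha)$ satisfies (i) $Y_{ji} = 0$ whenever $j - i > r$ and (ii) $Y_{ji} = -a \ne 0$ whenever $j - i = r$. Transposing, $(Y^T)_{ij} = 0$ for $j - i > r$ and $(Y^T)_{ij} = -a \ne 0$ for $j - i = r$, which are precisely the defining conditions of $V^r$. Since every matrix is similar to its transpose, $Y^T \in \orb$, so $\orb \cap V^r \ne \varnothing$ and Theorem~\ref{thm:r-regular} concludes $\orb \in \cF^r$.

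The main obstacle is the orientation mismatch: $V^r$ is defined by an upper-triangular (superdiagonal) pattern of nonzero entries, whereas the structure that the leading term $a\om^{-r}$ of $\ftype$ forces on $\Res(\alpha)$ lives on the lower-triangular (subdiagonal) side through $N_r = \Res(\om^{-r}\dzz)$. Recognizing the transpose as the bridge, and carefully controlling the $\Iwa^1$-induced ambiguity so that it perturbs only entries strictly above the $r$-th subdiagonal (hence preserves the rigid structure on and below that subdiagonal), are the heart of the argument.
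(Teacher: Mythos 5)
Your proof is correct, and at its core it is the same argument as the paper's: normalize $\alpha|_{\iwa}$ by a $\B$-action so that it lies in the $\Iwa^1$-coadjoint orbit of the formal type, deduce that $\Res(\alpha)$ equals $aN_r$ plus something supported strictly above the $r$th subdiagonal, and conclude via Theorem~\ref{thm:r-regular} that $\orb\in\cF^r$. The ``orientation mismatch'' you flagged, and the transpose trick you introduced to resolve it, are in fact reacting to an internal inconsistency in the paper rather than to a genuine obstacle. The displayed definition of $V^r$ uses $j-i$, making it look like a superdiagonal condition, but the accompanying prose (``nonzero entries on, and $0$'s below, the $r$th subdiagonal''), the proof of Proposition~\ref{Jordanform} (where $N_r$ is a subdiagonal matrix and $N_r+D_q\in V^r$ is invoked), and the paper's own proof of this lemma (which concludes $Y=-\Res(\alpha)\in V^r$ directly, with no transpose) all read $V^r$ as the subdiagonal set $\{X : X_{ij}=0 \text{ for } i-j>r,\ X_{ij}\ne0 \text{ for } i-j=r\}$. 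Under that reading your transpose is unnecessary; under the literal reading it is needed and does the job, since transposition preserves adjoint orbits and hence $\cF^r$. Either way the argument is sound. One small remark: you detour through the normal form of Proposition~\ref{functionalorbit} before extracting the containment $\alpha-\tilde\ftype_0\in\iwa^{-r+1}\dzz\cap\fgl_n(\C[z^{-1}])\dzz$; the paper shortcuts this by observing directly that $\alpha|_{\iwa}\in\Ad^*(\Iwa^1)\ftype$ already forces $\alpha=(a\om^{-r}+X')\dzz$ with $X'\in\iwa^{-r+1}$, since $\Ad^*(\Iwa^1)$ preserves $\iwa^{-r}\dzz$ and moves the leading term only by $\iwa^{-r+1}\dzz$. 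Both routes are valid; yours is just a bit longer.
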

\begin{proof} This is trivial for $r>n$, so assume that $r<n$.  Choose
  $\alpha\in \fgl_n(\C[z^{-1}])\dzz$ and $Y\in\orb$ such that
  $\alpha|_{\iwa}\in\Ad^*(\Iwa)(\ftype)$ and
  $Y=-\Res(\Ad^*(b)\alpha)$.  Write $\ftype=(a\om^{-r}+X)\dzz$ for
  some $a\in\C^*$ and for some $X\in\fcox^{-r+1}$.  Since
  $\Iwa=\T\Iwa^1$, we may assume without loss of generality that
  $\alpha|_{\iwa}\in\Ad^*(\Iwa^1)\ftype$.  This implies that
  $\alpha=(a\om^{-r}+X')\dzz$ for some $X'\in\iwa^{-r+1}$.  It is
  easy to see that $\Res(X'\dzz)$ has $0$'s in the $r$th
  subdiagonal and below.  In the notation of \S\ref{prelim}, this means that $Y\coloneqq -\Res(\alpha)\in V^r$.  By Theorem~\ref{thm:r-regular}, $Y$ has at most $r$ blocks for each eigenvalue.
\end{proof}

\begin{lem}\label{lemmacontains}
  Let $r$ and $n$ be positive integers with $\gcd(r,n)=1$, let
  $\ftype$ be a $\cC$-formal type of depth $r$, and let
  $q=\prod_{i=1}^s(x-a_i)^{m_i}\in\C[x]$ be a degree $n$ polynomial with $a_1,\ldots,a_s\in\C$ distinct modulo $\Z$.  If $\Res(\Tr(\ftype))=-\sum_{i=1}^sm_ia_i$, then $\prp{r}{q} \in \DS(\ftype, q)$.
\end{lem}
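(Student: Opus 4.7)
The plan is to construct an explicit element of the moduli space $\M(\ftype, \prp{r}{q})$ by combining the normalization results just established with the explicit representatives of $\prp{r}{q}$ from Proposition~\ref{Jordanform}. The key idea is that Corollary~\ref{residueresult} gives us enough flexibility to arrange the residue of a representative of $\ftype$'s coadjoint orbit to be exactly a matrix of the form $U_r + D_q$ handled by Proposition~\ref{Jordanform}.

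Writing $\ftype = (a\om^{-r} + X)\dzz$ with $a \in \C^*$ and $X \in \fcox^{-r+1} \subset \iwa^{-r+1}$, I will define
\[ D = \diag(\underbrace{-a_1, \ldots, -a_1}_{m_1}, \ldots, \underbrace{-a_s, \ldots, -a_s}_{m_s}) \in \ft \]
and invoke Corollary~\ref{residueresult} to produce $h \in \Iwa^1$ together with a representative $\alpha \in \fgl_n(\C[z^{-1}])\dzz$ of $\Ad^*(h)\ftype$ whose residue equals $\Res(a\om^{-r}\dzz) + D = aN_r + D$. The trace compatibility $\Tr(D) = -\sum m_i a_i = \Res(\Tr(\ftype))$ demanded by that corollary is precisely the standing hypothesis of the lemma.

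Setting $Y = -\Res(\alpha) = -aN_r - D$, observe that $-D$ equals $D_q = \diag(a_1, \ldots, a_1, \ldots, a_s, \ldots, a_s)$ (with each $a_i$ repeated $m_i$ times), while $-aN_r$ has the nonzero entry $-a$ on every position of the $r$th subdiagonal and zeros elsewhere. Proposition~\ref{Jordanform}, applied with $U_r = -aN_r$, therefore gives $Y \in \prp{r}{q}$. The pair $(\alpha, Y)$ then satisfies every defining constraint of $\M(\ftype, \prp{r}{q})$: $\alpha|_\iwa = \Ad^*(h)\ftype \in \Ad^*(\Iwa)\ftype$, $Y \in \prp{r}{q}$, and $\Res(\alpha) + Y = 0$ by construction, so $\prp{r}{q} \in \DS(\ftype, q)$.

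The real work has already been packaged into the three preceding results: Proposition~\ref{functionalorbit} puts $\ftype$ in a manageable normal form; Corollary~\ref{residueresult} uses further $\Iwa^1$-action to redistribute the diagonal part of the residue freely subject only to the trace constraint; and Proposition~\ref{Jordanform} exhibits explicit representatives of $\prp{r}{q}$ of exactly the shape $U_r + D_q$ that appears as the residue. The only nonroutine moment is aligning the trace hypothesis with the choice of $D$, which is automatic from the lemma's standing assumption, so the argument reduces to mechanical bookkeeping once these ingredients are in place.
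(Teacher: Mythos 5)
Your proof is correct and follows essentially the same route as the paper's: write $\ftype=(a\om^{-r}+X)\dzz$, use Proposition~\ref{Jordanform} to identify a representative $U_r+D_q\in\prp{r}{q}$, invoke Corollary~\ref{residueresult} to realize $-(U_r+D_q)$ as the residue of $\Ad^*(h)\ftype$ for some $h\in\Iwa^1$ (the trace hypothesis is exactly what the corollary requires), and read off a point of $\M(\ftype,\prp{r}{q})$. The only difference is cosmetic: you set $D=-D_q$ in the corollary, whereas the paper takes the diagonal matrix to be $D_q$ and absorbs the sign when forming $Y$; the two bookkeeping conventions are interchangeable.
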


\begin{proof}

  Write $\ftype=(a\om^{-r}+X)\dzz$ for
  some $a\in\C^*$ and for some $X\in\fcox^{-r+1}$.  By
  Proposition~\ref{Jordanform}, there exists $D$ with trace
  $\sum_{i=1}^sm_ia_i$ such that $-aN_r+D\in\prp{r}{q}$.  We now apply
  Corollary~\ref{residueresult} to obtain $g\in\Iwa^1$ such that
  $\Res(\Ad^*(g)\ftype)=aN_r-D$.  Let $\alpha\in\fgl_n(\pow)^\vee$ be
  any functional
  extending $\Ad^*(g)\ftype\in\iwa^\vee$.  Then the $\B$-orbit of
  $(\alpha,-aN_r+D)$ gives a point in $\M(\ftype,\prp{r}{q})$,
  so $\prp{r}{q}\in\DS(\ftype,q)$.
\end{proof}

\begin{proof}[Proof of Theorem~\ref{thm:mainthm}] If $\M( \ftype,
  \orb) \neq \varnothing$, then there exists
  $\alpha\in\fgl_n(\pow)^\vee$ and $Y\in\orb$ with
  $\Res(\alpha)+Y=0$.  Since $\Res(\Tr(\alpha))=\Res(\Tr(\ftype))$, we
  see that $\Res(\Tr(\ftype))=-\Tr(Y)=-\sum_{i=1}^sm_ia_i$.

  By Lemma~\ref{lemmacontains},
  $\prp{r}{q} \in \DS(\ftype, q)$, and by Lemma~\ref{lemmaatmostr}, $\DS(\ftype, q)
  \subset\prinfilt{\prp{r}{q}}$. To show equality, take
  $\orb\in\prinfilt{\prp{r}{p}}$.  Since $\prp{r}{q} \in \DS(\ftype,
  q)$, there exists some $X\in\fgl_n(\C[z^{-1}])$ and $Y\in\prp{r}{q}$
  such that $(X\dzz)|_{\iwa}\in\Ad^*(\Iwa)(\ftype)$ and
  $\Res(X\dzz)+Y=0$.  By a theorem of Krupnik~\cite[Theorem 1]{krupnik97}, there exists a
  strictly upper triangular matrix $N\in\fgl_n(\C)$ such that
  $Y+N\in\orb$.  Note that $N\subset\iwa^1$, so $(N\dzz)|_{\iwa}=0$.
  Thus, $(X-N)\in\fgl_n(\C[z^{-1}])$,
  $((X-N)\dzz)|_{\iwa}\in\Ad^*(\Iwa)(\ftype)$, and
  $\Res((X-N)\dzz)+(Y+N)=0$.  Hence $\M(\ftype, \orb) \neq
  \varnothing$, and the proof is finished.
\end{proof}

\begin{rmk}  At least in the case of unipotent monodromy, it is
  possible to avoid using Krupnik's Theorem by giving an explicit
  construction of an element of the moduli space.  We discuss this in
  ~\cite{KLMNS2}.
\end{rmk}

\section{Rigidity for Coxeter connections}\label{s:rigidity}
Let $\gc$ be a meromorphic $\G$-connection on $\pp$ which is regular on the
Zariski-open set $U=\pp\setminus\{x_1,\dots,x_k\}$.  Let $\fc_{x_i}$
denote the induced formal $\G$-connection at $x_i$.  The connection is called
\emph{physically rigid} if, for any meromorphic $\G$-connection $\gc'$ which is regular
on $U$ and satisfies $\fc'_{x_i}\cong\fc_{x_i}$ for all $i$, we have
$\gc'\cong\gc$.

In general, it is very difficult to determine whether a connection is
physically rigid.  A more accessible notion is given by
\emph{cohomological rigidity}, which means that
$H^1(\pp,j_{!*}\ad_{\gc})=0$, where $j:U\hookrightarrow\pp$ is the
inclusion.  If $\gc$ is irreducible, then $\gc$ being cohomologically rigid
implies that $\gc$ admits no infinitesimal
deformations~\cite{Yun14}.  For $\G=\GL_n(\C)$, Bloch and Esnault have shown
that cohomological rigidity and physical rigidity are equivalent~\cite{BE04}.

We call a $\cC$-formal type \emph{homogeneous} when it is
of the form $a\om^{-r}\dzz$ for $a\in\C^*$; it gives rise to a
``homogeneous Coxeter connection'' $d+a\om^{-r}\dzz$ on $\Gm$.
This connection has a toral singularity at $0$ and (possibly) a
regular singularity at $\infty$ with unipotent monodromy.  This notion also makes sense for any
complex simple group $\G$~\cite{KS2}.  Again, one can define formal types with
respect to a certain maximal
torus $\cC_{\G}\subset \G(F)$ called the Coxeter torus.  Moreover, if $r$ is any
positive integer relatively prime to the Coxeter number $h$, there
exists  an element $\om_{-r}\in\Lie(\cC_\G)$ such that
$a\om_{-r}\dzz$ may be viewed as a homogeneous formal type.  One
can again consider the corresponding Coxeter $\G$-connection on $\Gm$
with a homogeneous $\cC_\G$-toral irregular singularity of
slope $r/h$ at $0$ and (possibly) a regular singular point with
unipotent monodromy at $\infty$.  The case $r=1$ is the remarkable rigid connection
constructed by Frenkel and Gross~\cite{FrGr}.

In \cite{KS2}, Kamgarpour and Sage determined when these homogeneous
Coxeter $\G$-connections are (cohomologically) rigid for any simple $\G$.  It turns out
$r=1$ and $r=h+1$ always give rigid connections: the Frenkel--Gross and
``Airy $\G$-connection'' respectively.  For the exceptional groups,
there are no other such rigid connections except for $r=7$ in $E_7$.
However, for the classical groups, one
also has rigidity for $1<r<h$ with $r$ satisfying certain divisibility
conditions.  For example, in type $A$, these connections are rigid if
and only if $r|(n\pm 1)$.

In this paper, we generalize this result in type $A$ to give a
classification of rigid framable Coxeter connections with unipotent
monodromy at $\infty$.  (Framable means that we only consider Coxeter
connections contained in the relevant framable moduli space $\M(\ftype,\orb)$.)

 \begin{thm}\label{thm:rigidthm}
Let $\ftype$ be a rank $n$ maximally ramified formal
  type of slope $r/n$, and let $\orb$ be any nilpotent orbit with
  $\orb\succeq\orb^r_{x^n}$.  Then there exists a rigid connection
  with the given formal type and unipotent monodromy determined by $\orb$ if and only if
  $\orb=\orb^r_{x^n}$ and $r|(n\pm 1)$.
\end{thm}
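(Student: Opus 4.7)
The plan is to translate the rigidity question into a dimension computation for $\M(\ftype,\orb)$, and then analyze the resulting numerical condition in terms of $r$ and $n$. Any framable Coxeter connection $\gc$ is irreducible, since its formal part at $0$ is irreducible. For irreducible $\GL_n$-connections, cohomological and physical rigidity agree by Bloch--Esnault~\cite{BE04}, and cohomological rigidity of $\gc$ is equivalent to the vanishing of the tangent space $H^1(\pp,j_{!*}\ad\gc)$ to $\M(\ftype,\orb)$ at $\gc$. Thus the task reduces to determining when $\dim\M(\ftype,\orb) = 0$.

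To compute this dimension I would use the Hamiltonian reduction presentation from Theorem~\ref{genmodspace}. Since the center of $\GL_n(\C)$ acts trivially via conjugation, the effective group for the reduction has dimension $n^2-1$, so
\[
\dim\M(\ftype,\orb) = \dim\M_\ftype + \dim\orb - 2(n^2-1).
\]
The key input is the formula $\dim\M_\ftype = n^2-1 + r(n-1) = (n-1)(n+r+1)$. To verify it, note that $\M_\ftype$ fibres over $\B\backslash\GL_n(\C)$ (of dimension $n(n-1)/2$), with fibre at $\B g$ consisting of those functionals $\alpha$ on $\fgl_n(\pow)$ whose restriction to $\iwa$ lies in the coadjoint orbit $\Ad^*(\Iwa)(\ftype)$. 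Decomposing $\fgl_n(\pow)=\iwa\oplus W$ with $W$ of dimension $n(n-1)/2$, the component $\alpha|_W$ is free, while $\alpha|_\iwa$ varies in the orbit $\Ad^*(\Iwa)(\ftype)$. Since the leading term $a\om^{-r}$ of $\ftype$ is regular semisimple with stabiliser the Coxeter torus, a count in the truncation $\iwa/\iwa^{r+1}$ shows that $\dim \Ad^*(\Iwa)(\ftype) = (n-1)(r+1)$, and summing the three contributions yields the claimed formula. Substituting then gives
\[
\dim\M(\ftype,\orb) = r(n-1) + \dim\orb - (n^2-1),
\]
so rigidity is equivalent to the single numerical condition $\dim Z(\orb) = r(n-1)+1$.

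The final step is arithmetic. Write $n=kr+r'$ with $0\le r' < r$; the coprimality $\gcd(r,n)=1$ forces $\gcd(r,r')=1$. Using the standard formula $\dim Z(\lambda)=\sum_i (2i-1)\lambda_i$ for the centraliser of a nilpotent with partition type $\lambda$, and $\lambda=\{(k+1)^{r'},k^{r-r'}\}$ for $\orb^r_{x^n}$, a direct calculation gives
\[
\dim Z(\orb^r_{x^n}) = (k+1)(r')^2 + k\bigl(r^2-(r')^2\bigr) = rn - r'(r-r').
\]
Setting this equal to $r(n-1)+1$ reduces to $r'(r-r')=r-1$, whose only solutions in $\{1,\dots,r-1\}$ are $r'=1$ and $r'=r-1$, corresponding respectively to $r\mid n-1$ and $r\mid n+1$. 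Moreover, $r'\mapsto r'(r-r')$ is minimized on $\{1,\dots,r-1\}$ at the endpoints with value $r-1$, so $\dim Z(\orb^r_{x^n}) \le r(n-1)+1$ always, with equality exactly when $r\mid n\pm 1$. Since any $\orb$ strictly above $\orb^r_{x^n}$ in the closure order has strictly smaller centraliser, it cannot satisfy the rigidity equation either. Combining these steps proves the theorem: rigidity holds iff $\orb=\orb^r_{x^n}$ and $r\mid n\pm 1$.

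The main obstacle is the dimension formula for $\M_\ftype$: one must carefully identify $\Ad^*(\Iwa)(\ftype)$ as a finite-dimensional subset of $\iwa^\vee$ (which is a priori infinite-dimensional) and compute its dimension via the Coxeter-torus stabiliser in the graded truncation of $\iwa$. Once this is settled, the remaining Hamiltonian reduction is standard and the concluding arithmetic is elementary.
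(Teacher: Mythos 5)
Your route is genuinely different from the paper's, and the final arithmetic is correct, but there is a real gap at the very first step. The paper computes $\dim H^1(\pp, j_{!*}\ad_{\gc}) = n(\gc)$ directly from local invariants via the Frenkel--Gross Euler-characteristic formula: it calculates $\Irr(\ad_{\fc_0}) = r(n-1)$, the invariants $\fgl_n(\C)^{\Galgp_0} = \fgl_n(\C)^{\Galgp} = \C\Id$, and $\fgl_n(\C)^{\Galgp_\infty} = C(N_\orb)$, and never needs to relate $H^1$ to the tangent space of $\M(\ftype,\orb)$. You instead assert that $H^1(\pp,j_{!*}\ad_{\gc})$ \emph{is} the tangent space to $\M(\ftype,\orb)$ at $\gc$, which would make rigidity equivalent to $\dim\M(\ftype,\orb)=0$. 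That identification is not free: $\M(\ftype,\orb)$ is constructed as a Hamiltonian reduction of extended orbits (Theorem~\ref{genmodspace}), and matching its tangent space with the intermediate-extension/hypercohomology model for deformations of $\gc$ is a substantive piece of work, comparable in weight to the Frenkel--Gross formula the paper invokes. You neither prove it nor cite a source for it, and without it, $\dim\M(\ftype,\orb)=0$ does not by itself yield cohomological rigidity. (You also implicitly assume the reduction is clean, i.e.\ that the effective $\GL_n(\C)/\C^*$-action is free and the moment map is submersive, so that the naive dimension count applies; that too deserves a word.)

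The rest of the argument is sound and is in fact a nice alternative to the paper's last step. The formula $\dim\M_\ftype = (n-1)(n+r+1)$ does follow from the fibration over $\B\backslash\GL_n(\C)$ together with $\dim\Ad^*(\Iwa)(\ftype)=(n-1)(r+1)$; the latter rests on the stabiliser of $\ftype$ in $\Iwa/\Iwa^{r+1}$ being the image of $\cC\cap\Iwa$, which holds because $\ad(\om^{-r})$ has a one-dimensional kernel on each graded piece $\iwa(i)\cong\ft$ when $\gcd(r,n)=1$ --- worth a sentence, but not a gap. And the closing arithmetic, $\dim Z(\prp{r}{x^n}) = kr^2 + (r')^2 = rn - r'(r-r')$ leading to the factorization $(r'-1)\bigl(r'-(r-1)\bigr)=0$, is correct and cleanly replaces the paper's citation of \cite{KS2} for the identity $\dim\prp{r}{x^n}=(n+1-r)(n-1) \iff r\mid(n\pm 1)$. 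To complete your proof, supply (or cite) the identification of $\dim\M(\ftype,\orb)$ with $\dim H^1(\pp,j_{!*}\ad_{\gc})$, or else switch to the paper's local computation of $n(\gc)$, which sidesteps the moduli-space dimension entirely.
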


If $\gc$ is a Coxeter connection, let $\Galgp$ denote the global
differential Galois group, and let $\Galgp_0$ and $\Galgp_\infty$ denote the
local differential Galois groups at $0$ and $\infty$.   These
differential Galois groups are all algebraic subgroups of
$\GL_n(\C)$.  Also, let $\Irr(\ad_{\fc_0})$ denote the irregularity of
the formal connection $\ad_{\fc_0}$.  This is the sum of all the
slopes appearing in the slope decomposition of $\ad_{\fc_0}$; it is a
nonnegative integer.

Let $n(\gc)=\Irr(\ad_{\fc_0})-\dim(\fgl_n(\C)^{\Galgp_0})
-\dim(\fgl_n(\C)^{\Galgp_\infty})+2\dim(\fgl_n(\C)^{\Galgp})$, and let
$j:\Gm\hookrightarrow\pp$ be the inclusion.  It is shown
in~\cite[Proposition 11]{FrGr} that $\dim(H^1(\pp,j_{!*}\ad_{\gc}))=n(\nabla)$.  Thus,
we get the \emph{numerical criterion for rigidity} that $\gc$ is rigid
if and only if $n(\gc)=0$.

We now calculate the numerical criterion as in Section 4 of \cite{KS2}.  The local differential Galois group $\Galgp_0$ is given by $\Galgp_0\cong H\ltimes \langle\theta\rangle$, where $H$ is a
certain torus containing a regular semisimple element and $\theta$ is
an order $n$ element of $N(H)$~\cite{KS3}.  The centralizer of $H$ is thus a
maximal torus $\T'$, and $\theta\in N(\T')$ represents a Coxeter element
in the Weyl group.  We conclude (as in \cite{KS2}) that
$\fgl_n(\C)^{\Galgp_0}=(\fgl_n(\C)^H)^\theta=\Lie(T')^\theta=\C \Id$.  Since
$\Galgp_0\subset \Galgp$, we also have $\fgl_n(\C)^{\Galgp}=\C \Id$.

In general, if $\fc$ is a toral $\G$-connection with slope $s$, then by Lemma 19 of~\cite{KS1},
$\Irr(\ad_{\fc})=s|\Phi|$, where $\Phi$ is the set of roots with respect to
the maximal torus $T$.  In our particular case, we obtain
$\Irr(\ad_{\fc_0})=\frac{r}{n}n(n-1)=r(n-1)$.

Finally, if we fix some element $N_\orb\in\orb$, then $\fc_\infty$ is
regular singular with unipotent monodromy $\exp(2\pi i N_\orb)$.  This
means that $\Galgp_\infty\cong \langle\exp(2\pi i N_\orb)\rangle$, so
$\fgl_n(\C)^{\Galgp_\infty}=C(N_\orb)$, the centralizer of $N_\orb$.
Using the fact
that $\dim(\orb)=n^2-\dim(C(N_\orb) )$, we obtain
\begin{equation*} n(\gc)=(r-n-1)(n-1)+\dim(\orb).
\end{equation*}

Since $\M(\ftype, \prp{r}{x^n})\neq\varnothing$, we can take $\gc'$
with this local behavior.  Suppose that $\orb\succneqq \prp{r}{x^n}$.
We then have
$n(\gc)=(r-n-1)(n-1)+\dim(\orb)>
(r-n-1)(n-1)+\dim(\prp{r}{x^n})=n(\gc')=0$.  It follows that in this
case, $\gc$ is never rigid.

Finally, take $\orb=\prp{r}{x^n}$.  It was shown in \cite{KS2} that
$\dim(\prp{r}{x^n})=(n+1-r)(n-1)$ if and only if $r|(n\pm 1)$.  This
finishes the proof of the theorem.


\bibliography{references}
\bibliographystyle{amsalpha}

\end{document}